\newcommand{\jian}{\color{blue}}
\newcommand{\rene}{\color{red}}
\definecolor{mno}{rgb}{0.5,0.1,0.5}
\newcommand{\R}{\mathds R}
\newcommand{\N}{\mathds N}
\newcommand{\I}{\mathds 1}
\newcommand{\lip}{\mathrm{Lip}}
\newcommand{\loc}{\mathrm{loc}}
\newcommand{\supp}{\mathrm{supp}}
\newcommand{\diag}{\Delta}
\newtheorem{theorem}{Theorem}[section]
\newtheorem{lemma}[theorem]{Lemma}
\newtheorem{proposition}[theorem]{Proposition}
\newtheorem{corollary}[theorem]{Corollary}
\theoremstyle{definition}
\newtheorem{remark}[theorem]{Remark}
\newtheorem*{acknowledgement}{Acknowledgement}
\renewcommand{\leq}{\leqslant}
\renewcommand{\geq}{\geqslant}
\renewcommand{\le}{\leqslant}
\renewcommand{\ge}{\geqslant}
\begin{document}
\allowdisplaybreaks

\title[Semi-Dirichlet forms and L\'{e}vy type operators]{\bfseries Lower bounded semi-Dirichlet forms associated with L\'{e}vy type operators}

\author{Ren\'{e} L.\ Schilling \and Jian Wang}
\thanks{\emph{R.\ Schilling:} TU Dresden, Institut f\"{u}r Mathematische Stochastik, 01062 Dresden, Germany. \texttt{rene.schilling@tu-dresden.de}}
\thanks{\emph{J.\ Wang:} School of Mathematics and Computer Science, Fujian Normal
University, 350007, Fuzhou, P.R. China.
\texttt{jianwang@fjnu.edu.cn}}

\date{}

\maketitle

\begin{abstract}
Let $k:E\times E\to [0,\infty)$ be a non-negative measurable function on some locally compact separable metric space $E$. We provide some simple conditions such that the quadratic form with jump kernel $k$ becomes a regular lower bounded (non-local, non-symmetric) semi-Dirichlet form. If $E=\R^n$ we identify the generator of the semi-Dirichlet form and its (formal) adjoint. In particular, we obtain a closed expression of the adjoint of the stable-like generator $-(-\Delta)^{\alpha(x)}$ in the sense of Bass. Our results complement a recent paper by Fukushima and Uemura \cite{FU} and establish
 the relation of these results with the symmetric principal value (SPV) approach due to Zhi-ming Ma and co-authors \cite{HMS}.

\medskip

\noindent\textbf{Keywords:} non-local semi-Dirichlet forms; L\'{e}vy type operators; dual operators; stable-like processes

\noindent \textbf{MSC 2010:} 60J75; 60J25; 60J27; 31C25.
\end{abstract}

Let $(E,d,m)$ be a locally compact separable metric measure space. The reference measure $m$ is a Radon measure with full topological support. Recently, Fukushima and Uemura \cite{FU} were able to construct a regular lower bounded semi-Dirichlet form and the corresponding jump-type Hunt process for a given jump kernel $k(x,y)$.  A key ingredient in their construction are conditions that ensure that the symmetric part of the kernel, $k_s$, dominates the totally anti-symmetric part, $k_a$, where
$$
    k_s(x,y):=\tfrac{1}{2}\big(k(x,y)+k(y,x)\big)
    \quad\text{and}\quad
    k_a(x,y):=\tfrac{1}{2}\big(k(x,y)-k(y,x)\big).
$$
For the readers' convenience let us briefly recall these assumptions, see \cite[(2.1)--(2.4), Section 2]{FU},
\begin{align}
\tag{A0}    &x\mapsto\int_{y\neq x}\big(1\wedge d(x,y)^2\big) k_s(x,y)\,m(dy)\in L^1_{\loc}(E,m)\label{cond1},\\
\tag{A1}    &C_1:=\sup_{x\in E}\int_{d(x,y)\ge 1}|k_a(x,y)|\,m(dy)<\infty,\label{A1}\\
\tag{A2}    &C_2:=\sup_{x\in E} \int_{d(x,y)<1}|k_a(x,y)|^\gamma\,m(dy)<\infty
                \quad\text{for some}\quad\gamma\in (0,1],\label{A2}\\
\tag{A3}    &C_3 := \sup_{x,y\in E, \: 0<d(x,y)\le 1} \frac{|k_a(x,y)|^{2-\gamma}}{k_s(x,y)} < \infty
            \quad\text{for $\gamma$ from \eqref{A2}}.\label{A3}
\end{align}

In this note we will simplify these conditions. If $E=\R^n$ we obtain the explicit expressions for the generator of the form and its formal adjoint in terms of  Cauchy principal value integral (PV), which is related to the symmetric Cauchy principal value integral (SPV) in the sense of Ma et al.\ \cite{HMS}. This is motivated by and improves more recent development on non-local Dirichlet forms and L\'{e}vy type operators, e.g.\ \cite{U1,FU}.

\section{Lower Bounded Semi-Dirichlet Forms}\label{section1}
Let $(E,d,m)$ be a locally compact separable metric measure space equipped with a Radon measure $m$, and $k(x,y)$ a non-negative Borel measurable function on the space $E \times E\setminus\diag$, where $\diag$ denotes the diagonal $\{(x,x):x\in E\}$ in $E\times E$. The inner product and the norm in $L^2(E,m)$ are denoted by $\langle\cdot,\cdot\rangle_{L^2}$ and $\|\cdot\|_{L^2}$, respectively. As before, denote by $k_s$ and $k_a$ the symmetric part and the totally anti-symmetric part of $k$, respectively. Let $C_c^{\lip}(E)$ be the space of Lipschitz continuous functions on $E$ with compact support. Throughout this section, we will assume \eqref{cond1}.

A (not necessarily symmetric) bilinear form $(\eta,\mathscr F)$, $\mathscr F\subset L^2(E, m)$, is a lower bounded Dirichlet form if the following conditions are satisfied: for some $\alpha>0$
\begin{enumerate}[i)]
\item $\eta(u,u)\geq -\alpha\langle u,u\rangle_{L^2}$ for all $u\in\mathscr F$;
\item $\eta(u,v) \leq c \sqrt{\eta(u,u)+\alpha\langle u,u\rangle_{L^2}}\sqrt{\eta(v,v)+\alpha\langle v,v\rangle_{L^2}}$ for all $u,v\in\mathscr F$;
\item $(\mathscr F,\,\eta(\cdot,\cdot)+\alpha\langle\cdot,\cdot\rangle_{L^2})$ is a complete subspace of $L^2(E, m)$;
\item  $u^+\wedge 1\in\mathscr F$ for all $u\in\mathscr F$ and $\eta(u^+\wedge 1, u-u^+\wedge 1) \geq 0$.
\end{enumerate}
For further details we refer to \cite[Section 1]{FU} and the references therein.

For each $n\in N$, we define the operator $L_nu$ for $u\in C_c^{\lip}(E)$ by
\begin{equation*}\label{cond3}
    L_nu(x):=\int_{\{y\in E\,:\, d(x,y)>1/n\}} \big(u(y)-u(x)\big) k(x,y)\,m(dy),\quad x\in E,
\end{equation*}
and the quadratic form $\eta_n(u,v)$ for $u,v\in  C_c^{\lip}(E)$ by
$$
    \eta_n(u,v)
    :=-\langle L_nu, v\rangle_{L^2}
    =-\int_E L_nu(x) v(x)\,m(dx).
$$
Due to \eqref{cond1}, all integrals appearing in the definition of $L_n$ and $\eta_n$ are absolutely convergent. Finally, set
\begin{align*}
    \mathscr{E}(u,v)
    &= \iint_{y\neq x} (u(x)-u(y))(v(x)-v(y)) k_s(x,y) \,m(dx)\,m(dy),\\
    \mathscr{F}^r
    &=\Big\{u\in L^2(E,m): u \text{\ \ is Borel measurable and\ \ }\mathscr{E}(u,u)<\infty\Big\}.
\end{align*}
The condition \eqref{cond1} ensures that $(\mathscr{E}, \mathscr{F}^r)$ is a symmetric Dirichlet form on $L^2(E,m)$, and $\mathscr{F}^r$ contains the space $C_c^{\lip}(E)$. As usual, $\mathscr E_1(u,u) = \mathscr E(u,u) + \|u\|_{L^2}^2$, and we write $\mathscr{F}^0$ for the $\mathscr{E}_1$-closure of $C_c^{\lip}(E)$ in $\mathscr{F}^r$. In particular, $(\mathscr{E}, \mathscr{F}^0)$ is a regular symmetric Dirichlet form on $L^2(E,m)$, cf.\ \cite[Example 1.2.4]{FkU}.

Our main result in this section is the following simple condition which guarantees that the limit of the forms $\eta_n(u,v)$, $n\to\infty$ exists, and defines a regular lower bounded semi-Dirichlet form. This generalizes and simplifies the earlier result by Fukushima and Uemura \cite[Proposition 2.1 and Theorem 2.1]{FU}.

\begin{theorem}\label{th1}
Assume that \eqref{cond1} is satisfied and that
\begin{equation}\label{cond2}
    \sup_{x\in E} \int_{\{k_s(x,y)\neq 0\}}\frac{k_a(x,y)^2}{k_s(x,y)}\,m(dy) < \infty
\end{equation}
holds. Then we have the following two statements.

\medskip\noindent
\textup{(i)}
For all $u,v\in C_c^{\lip}(E)$, the limit
$
    \eta(u,v)=\lim_{n\rightarrow\infty}\eta_n(u,v)
$
exists. The form $\eta(u,v)$ has the following integral representation
\begin{equation}\label{th11}
    \eta(u,v)
    =\frac{1}{2}\,\mathscr{E}(u,v)+ \iint_{y\neq x}(u(x)-u(y))\,v(y) k_a(x,y) \,m(dx)\,m(dy),
\end{equation}
where the integral on the right hand side of \eqref{th11} is absolutely convergent.

\medskip\noindent
\textup{(ii)}
The form $\eta$ extends from $C_c^{\lip}(E)\times C_c^{\lip}(E)$ to $\mathscr{F}^0\times \mathscr{F}^0$ such that the pair $(\eta, \mathscr{F}^0)$ is a regular lower bounded semi-Dirichlet form on $L^2(E,m)$.
\end{theorem}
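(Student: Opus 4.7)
For part (i), the plan is to symmetrise $\eta_n$. Since the region $\{d(x,y)>1/n\}$ is symmetric, rewriting $k=k_s+k_a$ and exploiting the (anti)symmetry of $k_s$ and $k_a$ yields $\eta_n(u,v) = A_n(u,v) + B_n(u,v)$ with
\[
A_n(u,v) = \tfrac{1}{2}\iint_{d(x,y)>1/n} (u(x)-u(y))(v(x)-v(y))\, k_s(x,y)\,m(dx)\,m(dy),
\]
and $B_n(u,v) = \iint_{d(x,y)>1/n}(u(x)-u(y))\,v(y)\, k_a(x,y)\,m(dx)\,m(dy)$. Since $|u(x)-u(y)||v(x)-v(y)| \leq C(1\wedge d(x,y)^2)$ for $u,v\in C_c^{\lip}(E)$ with support in a common compact set, dominated convergence with the majorant supplied by \eqref{cond1} gives $A_n(u,v) \to \tfrac{1}{2}\mathscr{E}(u,v)$. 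For $B_n$, I would first establish absolute convergence of the limit integral via a Cauchy-Schwarz bound with weight $k_s$:
\[
\iint|u(x)-u(y)||v(y)||k_a(x,y)|\,m(dx)\,m(dy) \leq \sqrt{\mathscr{E}(u,u)}\,\sqrt{\iint v(y)^2 \frac{k_a(x,y)^2}{k_s(x,y)}\,m(dx)\,m(dy)} \leq \sqrt{C\,\mathscr{E}(u,u)}\,\|v\|_{L^2},
\]
where the integrand is understood to vanish on $\{k_s=0\}$ (since $k\geq 0$ forces $k_a=0$ there), and the last step uses \eqref{cond2} together with the symmetry of $k_s$ and $k_a^2$. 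A second application of dominated convergence then yields $B_n(u,v)\to \iint(u(x)-u(y))v(y)k_a(x,y)\,m(dx)\,m(dy)$, which together with the previous step gives \eqref{th11}.

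For part (ii), the same Cauchy-Schwarz bound delivers $|\eta(u,v)|\leq C_2 \sqrt{\mathscr{E}_1(u,u)\mathscr{E}_1(v,v)}$ on $C_c^{\lip}(E)\times C_c^{\lip}(E)$, so $\eta$ extends by continuity to $\mathscr{F}^0\times\mathscr{F}^0$. Applying the bound with $v=u$ and Young's inequality yields $\eta(u,u)+\alpha\|u\|_{L^2}^2 \geq \tfrac14 \mathscr{E}(u,u)\geq 0$ once $\alpha$ is large enough, which is condition (i) of the semi-Dirichlet form definition. Enlarging $\alpha$ further makes the norm $(\eta(\cdot,\cdot)+\alpha\langle\cdot,\cdot\rangle_{L^2})^{1/2}$ equivalent to $\mathscr{E}_1^{1/2}$ on $\mathscr{F}^0$; the sector condition (ii) is then immediate from the Cauchy-Schwarz bound, and the completeness (iii) is inherited from the $\mathscr{E}_1$-completeness of $\mathscr{F}^0$.

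The Dirichlet condition (iv) is the main obstacle: because $\eta$ involves the non-symmetric kernel $k_a$, the standard monotonicity argument for symmetric forms does not apply to the representation \eqref{th11} directly. The trick is to return to the pre-limit forms $\eta_n$, where the full \emph{positive} kernel $k$ (not just $k_s$) is available. For $u\in C_c^{\lip}(E)$ set $w:=u^+\wedge 1\in C_c^{\lip}(E)$ and $g:=u-w\in C_c^{\lip}(E)$; then
\[
\eta_n(w,g)=\iint_{d(x,y)>1/n}(w(x)-w(y))\,g(x)\,k(x,y)\,m(dx)\,m(dy).
\]
A case analysis in $x$ shows that, for every $y$, the integrand is non-negative: if $u(x)\in[0,1]$ then $g(x)=0$; if $u(x)>1$ then $w(x)=1\geq w(y)$ and $g(x)>0$; if $u(x)<0$ then $w(x)=0\leq w(y)$ and $g(x)<0$. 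Since $k\geq 0$, in each case $(w(x)-w(y))g(x)k(x,y)\geq 0$, so $\eta_n(w,g)\geq 0$, and passing to the limit via part (i) yields $\eta(w,g)\geq 0$. The extension to all $u\in\mathscr{F}^0$ uses continuity of $\eta$ and the fact that $u\mapsto u^+\wedge 1$ is a normal contraction, hence continuous on $\mathscr{F}^0$ in the $\mathscr{E}_1^{1/2}$-norm since $(\mathscr{E},\mathscr{F}^0)$ is a regular symmetric Dirichlet form. Finally, regularity of $(\eta,\mathscr{F}^0)$ follows from density of $C_c^{\lip}(E)$ in $\mathscr{F}^0$ (by construction) and in $C_c(E)$ (uniform norm, using Lipschitz partitions of unity on the locally compact separable metric space $E$).
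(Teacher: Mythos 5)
Your part (i) is exactly the paper's argument: the same splitting of $\eta_n$ into $\tfrac12\mathscr{E}_n$ plus the antisymmetric term, convergence of the symmetric part via \eqref{cond1}, and the weighted Cauchy--Schwarz estimate with $h(x)=\int_{k_s(x,y)\neq 0}k_a(x,y)^2/k_s(x,y)\,m(dy)$ --- including the two points the paper leaves implicit, namely that $k\ge 0$ forces $k_a=0$ on $\{k_s=0\}$ and that \eqref{cond2} controls the $x$-integral for fixed $y$ by the symmetry of $k_s$ and $k_a^2$. For part (ii) the paper merely invokes ``(i) and the argument used in the proof of \cite[Theorem 2.1]{FU}''; your verification of lower boundedness via Young's inequality, the sector condition, completeness through the equivalence of $\eta_\alpha^{1/2}$ with $\mathscr{E}_1^{1/2}$, and above all the pre-limit positivity $(w(x)-w(y))g(x)k(x,y)\ge 0$ for $w=u^+\wedge 1$, $g=u-w$ (which works precisely because the truncated forms $\eta_n$ see the full nonnegative kernel $k$ rather than $k_s$, $k_a$ separately) is exactly that argument, so the route is the same.

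The one step that is not justified as you state it is the passage from $u\in C_c^{\lip}(E)$ to general $u\in\mathscr{F}^0$ in the Dirichlet condition (iv): you assert that $u\mapsto u^+\wedge 1$ is \emph{strongly} $\mathscr{E}_1^{1/2}$-continuous on $\mathscr{F}^0$ ``since $(\mathscr{E},\mathscr{F}^0)$ is a regular symmetric Dirichlet form''. This is not a routine property: the standard theory (cf.\ \cite[Theorem 1.4.2]{FkU}) only yields that $u_n\to u$ in $\mathscr{E}_1$ implies $u_n^+\wedge 1\to u^+\wedge 1$ \emph{weakly} in $(\mathscr{F}^0,\mathscr{E}_1)$, with strong convergence only for Ces\`{a}ro means; strong continuity of normal contractions in symmetric Dirichlet spaces is true but is a nontrivial theorem of Ancona, which you would have to invoke explicitly. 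The standard repair needs only weak convergence: take $u_n\in C_c^{\lip}(E)$ with $u_n\to u$ in $\mathscr{E}_1$, set $w_n=u_n^+\wedge 1$; then $\mathscr{E}(w_n,w_n)\le\mathscr{E}(u_n,u_n)$, $w_n\to w:=u^+\wedge 1$ in $L^2$, hence $w_n\to w$ $\mathscr{E}_1$-weakly along a subsequence (so $w\in\mathscr{F}^0$). Since $v\mapsto\eta(v,u)$ is $\mathscr{E}_1$-continuous and $\eta(w_n,u_n-u)\to 0$ by boundedness, $\eta(w_n,u_n)\to\eta(w,u)$; and weak lower semicontinuity of the nonnegative quadratic form $v\mapsto\eta(v,v)+\alpha\|v\|_{L^2}^2$ together with $\|w_n\|_{L^2}\to\|w\|_{L^2}$ gives $\eta(w,w)\le\liminf_n\eta(w_n,w_n)\le\lim_n\eta(w_n,u_n)=\eta(w,u)$, i.e.\ $\eta(w,u-w)\ge 0$. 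With this replacement (or an explicit citation of Ancona's theorem) your proof is complete and coincides with the argument the paper intends.
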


Let us briefly show that the conditions imposed by Fukushima and Uemura are more restrictive than \eqref{cond2}. Indeed, if \eqref{A1}--\eqref{A3} hold, then we find for $x\in E$,
\begin{align*}
&\int_{k_s(x,y)\neq 0} \frac{k_a(x,y)^2}{k_s(x,y)}\,m(dy)\\
&\le\int_{\begin{subarray}{c}d(x,y)\le 1,\\ {k_s(x,y)\neq 0}\end{subarray}}
    \frac{|k_a(x,y)|^{2-\gamma}}{k_s(x,y)}\,|k_a(x,y)|^\gamma\,m(dy)
    +\int_{d(x,y)> 1}k_s(x,y)\,m(dy)\\
&\le\left\{\sup_{\begin{subarray}{c}d(x,y)\le 1,\\ {k_s(x,y)\neq 0}\end{subarray}}
    \frac{|k_a(x,y)|^{2-\gamma}}{k_s(x,y)}\right\} \int_{d(x,y)\le 1} |k_a(x,y)|^\gamma\, m(dy)
    + \int_{d(x,y)> 1}k_s(x,y)\,m(dy)\\
&\le C_2C_3+C_1.
\end{align*}
In the first inequality we have used that $|k_a(x,y)|\le k_s(x,y)$.

\begin{proof}[Sketch of the proof of Theorem \ref{th1}]
From the definition of $\eta_n$ we find for all $u,v\in C_c^{\lip}(E)$ that
$$
    \eta_n(u,v)
    =\frac{1}{2}\,\mathscr{E}_n(u,v)+\iint_{d(x,y)\ge 1/n} (u(x)-u(y))v(y)k_a(x,y)\,m(dx)\,m(dy),
$$
where
$$
    \mathscr{E}_n(u,v)=\iint_{d(x,y)\ge 1/n}(u(x)-u(y))(v(x)-v(y)) k_s(x,y) \,m(dx)\,m(dy).
$$
Because of \eqref{cond1}, $\mathscr{E}_n(u,v)$ converges to $\mathscr{E}(u,v)$ as $n\rightarrow\infty$.

To see the convergence of the non-symmetric part, we set for $x\in E$
$$
    h(x):=\int_{k_s(x,y)\neq 0} \frac{k_a(x,y)^2}{k_s(x,y)}\,m(dy).
$$
An application of the Cauchy-Schwarz inequality and \eqref{cond2} show
\begin{align*}
&\iint\limits_{d(x,y)\ge 1/n} |u(x)-u(y)||v(y)||k_a(x,y)|\,m(dx)\,m(dy)\\
&=\iint\limits_{\begin{subarray}{c}d(x,y)\ge 1/n,\\{k_s(x,y)\neq 0}\end{subarray}} |u(x)-u(y)| k_s(x,y)^{1/2}\cdot |v(y)| |k_a(x,y)| k_s(x,y)^{-1/2}\,m(dx)\,m(dy)\\
&\le \Bigg[\;\;\iint\limits_{d(x,y)\ge 1/n} (u(x)-u(y))^2 k_s(x,y)\,m(dx)\,m(dy)\Bigg]^{\frac 12} \times\\
&\qquad\qquad\times\Bigg[\;\;\iint\limits_{\begin{subarray}{c}d(x,y)\ge 1/n,\\{k_s(x,y)\neq 0}\end{subarray}} v(y)^2\frac{k_a(x,y)^2}{k_s(x,y)}\,m(dx)\,m(dy)\Bigg]^{\frac 12}\\
&\le  \Big[\mathscr{E}_n(u,u)\Big]^{\frac 12} \bigg[\int v^2(y) h(y)\,m(dy)\bigg]^{\frac 12}\\
&\le \Big[\mathscr{E}(u,u)\Big]^{\frac 12}  \|h\|_\infty^{
\frac 12}  \|v\|_{L^2}.
\end{align*}
This shows that the expression
$$
    \iint_{d(x,y)\ge 1/n} (u(x)-u(y))v(y)k_a(x,y)\,m(dx)\,m(dy)
$$
converges absolutely as $n\rightarrow\infty$ and (i) follows. In order to see (ii), we use (i) and the argument used in the proof of \cite[Theorem 2.1]{FU}.
\end{proof}

The semi-Dirichlet form $(\eta, \mathscr{F}^0)$ given by \eqref{th11} is a coercive closed form in the sense of Ma-R\"{o}ckner, cf.\ \cite[Chapter I, Definition 2.4, page 16]{MR}. Then, by \cite[Chapter I, Proposition 2.16, page 23]{MR}, $(L,D(L))$ is the (pre-)generator of the form $(\eta, \mathscr{F}^0)$, where
$D(L)=\{u\in \mathscr{F}^0\,|\, v\mapsto \eta(u,v)\text{\ is continuous with respect to\ }\|\cdot\|_{L^2}\text{\ on\  }\mathscr{F}^0\}.$  According to \cite[Chapter I, Theorem 2.15, page 22]{MR}, the generator $(L,D(L))$ is a
linear operator mapping $D(L)$ into
$L^2(E,m)$ such that
\begin{equation}\label{pregenerator}
    \eta(u,v)=-\langle Lu, v\rangle_{L^2}, \qquad u\in D(L),\,\, v\in \mathscr{F}^0.
\end{equation} However, for the semi-Dirichlet form $\eta(u,v)$ given by \eqref{th11}, it is in general difficult to find a closed expression for the generator $(L, D(L))$.

Note that any semi-Dirichlet form can be uniquely decomposed into three terms, which involve the integral in the sense of the symmetric Cauchy principal value (\text{SPV}), cf.\ see \cite[Definition 2.5, Theorems 2.6 and 4.1]{HMS}.
Motivated by this fact, we can obtain some information on $(L,D(L))$ if we assume that
\begin{equation}\label{cond4}
    \int_{0<d(x,y)\le 1}d(x,y)|k_a(x,y)|\,m(dy)<\infty,\qquad x\in E.
\end{equation}

\begin{proposition}\label{levy}
Assume that \eqref{cond1}, \eqref{cond2} and \eqref{cond4} hold. Then,
$$
    \eta(u,v)=-\langle Bu,v\rangle_{L^2},\quad
    u\in C^*(E), \; v\in C_c^{\lip}(E),
$$
where
$$
    C^*(E)
    :=\Big\{u\in C_c^{\lip}(E)\,:\, Bu\text{\ exists and belongs to\ } L^2(E,m)\Big\}
$$
and
\begin{equation*}
    Bu(x): = \mathrm{PV}\!\int_{y\neq x}\big(u(y)-u(x)\big)k_s(x,y)\,m(dy)
    +\int_{y\neq x}\big(u(y)-u(x)\big)k_a(x,y)\,m(dy);
\end{equation*}
$\mathrm{PV}\!\int\cdots dm$ indicates the Cauchy principal value, i.e.\ for any $x\in E$, the limit
$$
    \lim_{j\to\infty} \int_{\{y\in E\,:\, d(x,y)\ge 1/j\}} \big(u(y)-u(x)\big)k_s(x,y)\,m(dy).
$$
\end{proposition}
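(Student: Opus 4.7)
My plan is to start from the integral representation
\[
    \eta(u,v)=\tfrac12\mathscr{E}(u,v)+\iint_{y\neq x}(u(x)-u(y))v(y)k_a(x,y)\,m(dx)m(dy)
\]
supplied by Theorem~\ref{th1}(i), and to identify the two summands separately with $-\langle B^{(s)}u,v\rangle_{L^2}$ and $-\langle B^{(a)}u,v\rangle_{L^2}$, where $B^{(s)}u(x):=\mathrm{PV}\!\int(u(y)-u(x))k_s(x,y)\,m(dy)$ is the principal-value contribution and $B^{(a)}u(x):=\int(u(y)-u(x))k_a(x,y)\,m(dy)$ is the absolutely-convergent antisymmetric contribution to $Bu$. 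Summing them produces $-\langle Bu,v\rangle_{L^2}$, which is the claim.

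For the antisymmetric term I first check that $B^{(a)}u(x)$ is absolutely convergent at every $x\in E$: the Lipschitz estimate $|u(y)-u(x)|\le L\,d(x,y)$ combined with \eqref{cond4} handles the integral over $\{d(x,y)\le 1\}$, while Cauchy-Schwarz applied to $|k_a|=(k_a^2/k_s)^{1/2}k_s^{1/2}$ with \eqref{cond2} and \eqref{cond1} handles $\{d(x,y)>1\}$. The same bounds combined with the compact support of $v$ make the antisymmetric double integral in $\eta(u,v)$ absolutely convergent. A relabelling $x\leftrightarrow y$ that uses $k_a(y,x)=-k_a(x,y)$ converts the factor $v(y)$ into $v(x)$; Fubini then identifies the double integral with $-\int v(x)B^{(a)}u(x)\,m(dx)$.

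For the symmetric term I work through the truncations $\mathscr{E}_n(u,v)$. On $\{d(x,y)\ge 1/n\}$ the integrand $v(x)(u(y)-u(x))k_s(x,y)$ is absolutely integrable (Lipschitz estimate on $u$, compact support of $v$, and \eqref{cond1}), so Fubini together with the symmetry $k_s(y,x)=k_s(x,y)$ yields
\[
    \tfrac12\mathscr{E}_n(u,v)=-\int v(x)L_n^{(s)}u(x)\,m(dx),
\]
where $L_n^{(s)}u$ denotes the obvious $k_s$-analogue of $L_nu$. As $n\to\infty$ the left side converges to $\tfrac12\mathscr{E}(u,v)$ by dominated convergence on the two-dimensional integral, with dominant $[(u(x)-u(y))^2k_s]^{1/2}[(v(x)-v(y))^2k_s]^{1/2}\in L^1(E\times E)$. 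The defining property of $C^*(E)$ gives the pointwise convergence $L_n^{(s)}u(x)\to B^{(s)}u(x)$.

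The main obstacle is to pass this pointwise limit through $\int v\,\cdot\,dm$ on the right side of the displayed identity. The useful tool is the splitting $L_n^{(s)}u=L_nu-L_n^{(a)}u$. For the antisymmetric partial sums, Cauchy-Schwarz together with \eqref{cond2} yields the uniform-in-$n$ estimate $|L_n^{(a)}u(x)|^2\le C_{\eqref{cond2}}\int(u(y)-u(x))^2k_s(x,y)\,m(dy)$, whose majorant is $L^1_{\loc}(E,m)$ by \eqref{cond1}, so $\int vL_n^{(a)}u\,dm\to\int vB^{(a)}u\,dm$ directly by DCT. The full partial sums $L_nu$ converge pointwise to $Bu\in L^2(E,m)$; combining this with the uniform $L^2_{\loc}$-bound on $L_n^{(a)}u$ and the equi-integrability forced by $\|Bu\|_{L^2}<\infty$ one deduces $\int vL_nu\,dm\to\int vBu\,dm$, which is the technical core of the argument. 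Once both identifications are in place one concludes
\[
    \eta(u,v)=-\langle B^{(s)}u,v\rangle_{L^2}-\langle B^{(a)}u,v\rangle_{L^2}=-\langle Bu,v\rangle_{L^2},
\]
as asserted.
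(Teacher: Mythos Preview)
Your overall strategy---splitting $\eta(u,v)$ into its symmetric and antisymmetric parts via Theorem~\ref{th1}(i), handling $B^{(a)}u$ through the relabelling $x\leftrightarrow y$ with $k_a(y,x)=-k_a(x,y)$ and Fubini, and symmetrizing $\mathscr E_n(u,v)$ to obtain $\tfrac12\mathscr E_n(u,v)=-\int vL_n^{(s)}u\,dm$---is precisely the paper's approach. Your treatment of the antisymmetric contribution is correct and more explicit than the paper's.

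The problem is in the paragraph you label ``the main obstacle.'' You propose to obtain $\int vL_n^{(s)}u\,dm\to\int vB^{(s)}u\,dm$ by writing $L_n^{(s)}u=L_nu-L_n^{(a)}u$, handling $L_n^{(a)}u$ via dominated convergence (which is fine), and then asserting $\int vL_nu\,dm\to\int vBu\,dm$ on the grounds of ``the equi-integrability forced by $\|Bu\|_{L^2}<\infty$.'' That last step is not justified: knowing that the \emph{pointwise limit} $Bu$ lies in $L^2$ gives no uniform bound on the approximants $L_nu$, and pointwise convergence to an $L^2$ function does not imply equi-integrability of the sequence. Worse, the splitting is circular: once the $L_n^{(a)}u$ convergence is established, the claimed convergence for $L_nu$ is logically equivalent to the one for $L_n^{(s)}u$ you set out to prove, so the detour through $L_nu$ buys nothing. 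No genuine dominant for $vL_n^{(s)}u$ is supplied, and none is obvious---for instance, the near-diagonal part $\int_{1/n\le d(x,y)\le 1}|u(y)-u(x)|k_s(x,y)\,m(dy)$ has no uniform-in-$n$ $L^1$ bound in general.

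To be fair, the paper is equally terse at this very point: after the symmetrization identity it simply writes ``the claim follows by the dominated convergence theorem,'' without specifying what is being dominated. So you have not overlooked an argument the paper provides; the difficulty is genuine, and your proposed resolution does not close it.
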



\begin{proof}
    The condition \eqref{cond4} ensures that the operator $B$ is well defined for all $x$. According to the proof of Theorem \ref{th1}, \eqref{cond1} and \eqref{cond2} imply that for any $u\in C^*(E)$ and $ v\in C_c^{\lip}(E)$, the form $\langle Bu, v\rangle_{L^2}$ is also well defined and finite.

    On the other hand, under \eqref{cond1} and \eqref{cond2}, Theorem \ref{th1} shows that
$$
    \eta(u,v)=\frac{1}{2}\lim_{j\to\infty}\mathscr{E}_j(u,v)+ \iint_{y\neq x}(u(x)-u(y))v(y) k_a(x,y) \,m(dx)\,m(dy),
$$
    with
$$
    \mathscr E_j(u,v) = \iint_{d(x,y)\ge 1/j} (u(x)-u(y))(v(x)-v(y))k_s(x,y)\,m(dx)\,m(dy).
$$
    Since $k_s(x,y)=k_s(y,x)$,
\begin{align*}
    \mathscr E_j(u,v)
    &= \frac 12\iint_{d(x,y)\ge 1/j} (u(x)-u(y))v(x)k_s(x,y)\,m(dx)\,m(dy) \\
    &\qquad- \frac 12\iint_{d(x,y)\ge 1/j} (u(x)-u(y))v(y)k_s(x,y)\,m(dx)\,m(dy)\\
    &= \frac 12\iint_{d(x,y)\ge 1/j} (u(y)-u(x))v(y)k_s(y,x)\,m(dx)\,m(dy)\\
    &\qquad- \frac 12\iint_{d(x,y)\ge 1/j} (u(x)-u(y))v(y)k_s(x,y)\,m(dx)\,m(dy)\\
    &= \iint_{d(x,y)\ge 1/j} (u(y)-u(x))v(y)k_s(x,y)\,m(dx)\,m(dy),
\end{align*}
and the claim follows by the dominated convergence theorem.
\end{proof}

To get an explicit expression for the generator associated with the semi-Dirichlet form $\eta(u,v)$, we now need to characterize the domain $C^*(E)$.
\begin{theorem} \label{theorem3} Assume that $E=\R^n$ is equipped with the Euclidean metric $d(x,y)=|x-y|$ and Lebesgue measure $m(dx)=dx$. Suppose that
\begin{equation}\label{l2cond} x\mapsto\int_{y\neq x}\big(1\wedge |y-x|^2\big) k_s(x,y)\,dy\in L^2_{\loc}(dx),\end{equation}
\begin{equation}\label{12cond1} x\mapsto\int_{0<|z|\le 1}|z||k_s(x,x+z)-k_s(x,x-z)|\,dz \in L^2_{\loc}(dx)\end{equation}  and
\begin{equation}\label{l2cond2}x\mapsto\int_{\{y\in\R^n\,:\, |y-x|\ge 1\}} k_s(x,y)\,dy\in L^2(dx)\cup L^\infty (dx).\end{equation}
Let $(L, D(L))$ be a generator associated through \eqref{pregenerator} with the semi-Dirichlet form $\eta(u,v)$ given by \eqref{th11}. If \eqref{cond2} and \eqref{cond4} hold, then the set of twice differentiable functions with compact support is in the domain of $L$, i.e.\ $C_c^2(\R^n)\subset D(L)$, and on $C_c^2(\R^n)$ the operator $L$ is of the following form
 \begin{equation}\label{cond3}\aligned
    Lu(x)    &=\int_{z\neq 0}\big(u(x+z)-u(x)-\nabla u(x) \cdot z\I_{\{|z|\le 1\}}\big)k_s(x,x+z)\,dz\\
    &\quad    +\frac{1}{2}\nabla u(x)\cdot \int_{0<|z|\le 1} z\big(k_s(x,x+z)-k_s(x,x-z)\big) \,dz\\
    &\quad+ \int_{y\neq x}\big(u(y)-u(x)\big)k_a(x,y)\,dy.\endaligned
\end{equation}
\end{theorem}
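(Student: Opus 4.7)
The plan has three steps: first, apply Proposition \ref{levy} to write $\eta(u,v)=-\langle Bu,v\rangle_{L^2}$ for $u\in C^*(\R^n)$ and $v\in C_c^{\lip}(\R^n)$; second, for $u\in C_c^2(\R^n)$, convert the Cauchy principal value defining $Bu$ into the L\'evy-type form \eqref{cond3} via a second-order Taylor expansion combined with a symmetrization trick; third, verify $Bu\in L^2(\R^n)$ under \eqref{l2cond}--\eqref{l2cond2} and extend the identity $\eta(u,v)=-\langle Bu,v\rangle_{L^2}$ from $v\in C_c^{\lip}(\R^n)$ to all $v\in\mathscr{F}^0$ by $\mathscr{E}_1$-density and bilinear continuity, which places $u$ in $D(L)$ with $Lu=Bu$.

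For the Taylor-plus-symmetrize step, fix $u\in C_c^2(\R^n)$, change variables $z=y-x$ in the truncated integral, and split at $|z|=1$. On $|z|\leq 1$, add and subtract $\nabla u(x)\cdot z$: the remainder piece is dominated by $\tfrac12\|D^2 u\|_\infty |z|^2 k_s(x,x+z)$ and passes to the $j\to\infty$ limit by \eqref{cond1} and dominated convergence; the drift piece $\int_{1/j\leq|z|\leq 1}\nabla u(x)\cdot z\,k_s(x,x+z)\,dz$, after the substitution $z\mapsto -z$ and averaging, equals
\[
    \tfrac12\nabla u(x)\cdot\int_{1/j\leq|z|\leq 1}z\bigl(k_s(x,x+z)-k_s(x,x-z)\bigr)\,dz,
\]
whose $j\to\infty$ limit exists absolutely by \eqref{12cond1}. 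Adding the $|z|>1$ part (absolutely convergent by \eqref{cond1}) and the antisymmetric part of $Bu$ from Proposition \ref{levy} reproduces \eqref{cond3}.

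For $Bu\in L^2(\R^n)$, I estimate term by term. The Taylor remainder is at most $\|D^2 u\|_\infty\int_{|z|\leq 1}|z|^2 k_s(x,x+z)\,dz$, which lies in $L^2_{\loc}(dx)$ by \eqref{l2cond}; compact support of $u$ and $\nabla u$ confines it to a $1$-neighbourhood of $\supp u$, yielding global $L^2$. The drift term is handled analogously via \eqref{12cond1}. The far-field pieces $u(x)\int_{|y-x|>1}k_s(x,y)\,dy$ and $\int_{|y-x|>1}u(y)k_s(x,y)\,dy$ lie in $L^2$ under either alternative of \eqref{l2cond2}: in the $L^\infty$ case a Cauchy--Schwarz split $k_s=k_s^{1/2}\cdot k_s^{1/2}$ followed by Fubini and the symmetry $k_s(x,y)=k_s(y,x)$ reduces everything to $\|g\|_\infty^2\|u\|_{L^2}^2$ with $g(x):=\int_{|y-x|>1}k_s(x,y)\,dy$; in the $L^2$ case the pointwise bound $|u(y)|\leq \|u\|_\infty$ together with the compact support of $u$ suffices. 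Finally, the antisymmetric contribution obeys the pointwise Cauchy--Schwarz bound
\[
    \Bigl|\int(u(y)-u(x))k_a(x,y)\,dy\Bigr|^2\leq h(x)\int(u(y)-u(x))^2 k_s(x,y)\,dy
\]
with $h(x):=\int k_a(x,y)^2/k_s(x,y)\,dy$ bounded by \eqref{cond2}, so its $L^2$-norm is at most $\|h\|_\infty^{1/2}\mathscr{E}(u,u)^{1/2}<\infty$.

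The main obstacle is the $L^2$-bound for the far-field convolution-type term $\int_{|y-x|>1}u(y)k_s(x,y)\,dy$: it needs both the symmetry of $k_s$ (to swap $x$ and $y$ via Fubini after a Cauchy--Schwarz split) and separate treatment of the two alternatives in \eqref{l2cond2}. Once all terms are in $L^2$, the extension of $\eta(u,v)=-\langle Bu,v\rangle_{L^2}$ to $v\in\mathscr{F}^0$ is routine: by the $\mathscr{E}_1$-density of $C_c^{\lip}(\R^n)$ in $\mathscr{F}^0$ and the joint $\mathscr{E}_1$-continuity of $\eta$ (from the semi-Dirichlet inequality ii)), together with the $L^2$-continuity of $-\langle Bu,\cdot\rangle_{L^2}$, the identity extends and delivers the required $L^2$-continuity of $v\mapsto\eta(u,v)$ on $\mathscr{F}^0$, completing the proof.
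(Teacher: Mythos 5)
Your proposal is correct and follows essentially the same route as the paper's proof: Proposition \ref{levy} plus the symmetrization identity $\int_{\varepsilon\le|z|\le1}z\,(k_s(x,x+z)+k_s(x,x-z))\,dz=0$ to turn the principal value into the compensated L\'evy form, then an $L^2$-verification using Cauchy--Schwarz with $k_a^2/k_s$ and \eqref{cond2} for the antisymmetric part, \eqref{l2cond}--\eqref{12cond1} for the near field, and the two alternatives of \eqref{l2cond2} (with Fubini and the symmetry of $k_s$ in the $L^\infty$ case) for the far field. The only differences are cosmetic --- you split the symmetric part term by term using the support of $u$, where the paper splits spatially into $B_{2r}(0)$ and its complement, and you spell out the density/sector-condition extension to $v\in\mathscr{F}^0$ that the paper leaves implicit.
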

\begin{proof}\emph{Step 1:} Note that the set $C_c^{\lip}(\R^n)$ is dense in both $\mathscr{F}^0$ and $D(L)$ with respect to $\|\cdot\|_{L^2}$. According to Proposition \ref{levy}, under \eqref{cond2}, \eqref{cond4} and \eqref{l2cond}, the operator $L$ has the following form
$$
 Lu(x)= \mathrm{PV}\!\int_{y\neq x} \big(u(y)-u(x)\big)k_s(x,y)\,dy +\int_{y\neq x}\big(u(y)-u(x)\big)k_a(x,y)\,dy
$$ on the set $C^*(\R^n)\cap D(L)$. Therefore, to get the required assertion it is sufficient to check that $C_c^2(\R^n) \subset C^*(\R^n)\cap D(L)$, and that the principal value integral is the same as \eqref{cond3}.

   First, since for any $\varepsilon\ge0$, $$
    \int_{\varepsilon\le |z|\le 1} z \,(k_s (x,x+z)+k_s(x,x-z))\, dz=0,
$$ it holds for any $u\in C_c^2(\R^n)$ that
\begin{equation}\label{rrr}\begin{aligned}
&\mathrm{PV}\!\int_{y\neq x}\big(u(y)-u(x)\big)k_s(x,y)\,dy\\
&= \mathrm{PV}\!\int_{z\neq 0}\big(u(x+z)-u(x)\big)k_s(x,x+z)\,dz\\
&= \lim_{\varepsilon\to0} \int_{|z|\ge \epsilon}\big(u(x+z)-u(x)\big)k_s(x,x+z)\,dz\\
&= \lim_{\varepsilon\to0} \bigg[\int_{|z|\ge \epsilon}\big(u(x+z)-u(x)\big)k_s(x,x+z)\,dz\\
&\qquad\qquad-\frac{1}{2} \int_{\varepsilon\le |z|\le1} z \,(k_s (x,x+z)+k_s(x,x-z))\, dz \cdot\nabla u(x)\bigg]\\
&= \lim_{\varepsilon\to0} \bigg[\int_{|z|\ge \epsilon}\big(u(x+z)-u(x)-\nabla u(x)\cdot z\I_{\{|z|\le 1\}}\big)k_s(x,x+z)\,dz\\
&\qquad\qquad+ \frac{1}{2}\int_{\varepsilon\le |z|\le1}z \,(k_s (x,x+z)-k_s(x,x-z))\, dz\cdot\nabla u(x)\bigg]\\
&= \int_{z\neq 0}\big(u(x+z)-u(x)-\nabla u(x) \cdot z\I_{\{|z|\le 1\}}\big)k_s(x,x+z)\,dz\\
    &\qquad\qquad    +\frac{1}{2}\nabla u(x)\cdot \int_{0<|z|\le 1} z\big(k_s(x,x+z)-k_s(x,x-z)\big) \,dz,
\end{aligned}\end{equation}
where the last equality follows from \eqref{l2cond}, \eqref{12cond1} and the dominated convergence theorem. Hence, \eqref{rrr} immediately yields that for any $u\in C_c^2(\R^n)$, $\mathrm{PV}\!\int_{y\neq x} \big(u(y)-u(x)\big)k_s(x,y)\,dy$ exists, and it also gives us the explicit expression \eqref{cond3} for $L$ on $C_c^2(\R^n)$.

\medskip\noindent
\emph{Step 2:}  Because of Step 1 and \eqref{pregenerator}, to complete the proof we only need to verify that the operator $L$ maps $C_c^2(\R^n)$ into $L^2(\R^n)$.

Let
\begin{align*}I_x   & = \mathrm{PV}\!\int_{y\neq x}\big(u(y)-u(x)\big)k_s(x,y)\,dy\\
&=\int_{z\neq 0}\big(u(x+z)-u(x)-\nabla u(x) \cdot z\I_{\{|z|\le 1\}}\big)k_s(x,x+z)\,dz\\
    &\quad    +\frac{1}{2}\nabla u(x)\cdot \int_{0<|z|\le 1} z\big(k_s(x,x+z)-k_s(x,x-z)\big) \,dz\end{align*} and
$I\!\!I_x=   \int_{y\neq x}\big(u(y)-u(x)\big)k_a(x,y)\,dy.$ By the Cauchy-Schwarz inequality,
 \begin{align*}
   \|I\!\!I_x\|_{L^2(dx)}^2&=\int\bigg( \int_{y\neq x}\big(u(y)-u(x)\big)k_a(x,y)\,dy\bigg)^2\,dx\\
   &=\int\bigg( \int_{k_s(x,y)\neq 0}\big(u(y)-u(x)\big)\sqrt{k_s(x,y)}\frac{k_a(x,y)}{\sqrt{k_s(x,y)}}\,dy\bigg)^2\,dx\\
   &\le \int \bigg(\int \big(u(y)-u(x)\big)^2 k_s(x,y)\, dy\bigg)\bigg( \int_{k_s(x,y)\neq 0} \frac{k^2_a(x,y)}{k_s(x,y)}\,dy\bigg) \,dx\\
   &\le \bigg[\sup_{x\in\R^n} \int_{k_s(x,y)\neq 0} \frac{k^2_a(x,y)}{k_s(x,y)}\,dy\bigg] \, \mathscr{E}(u,u)<\infty,
\end{align*} where $\frac{1}{2}\mathscr{E}(u,u)$ is the symmetric part of $\eta(u,u)$ given by \eqref{th11}. On the other hand, for any $r>0$, it holds that
$$\|I_x\|_{L^2(dx)}^2\le \|\I_{B_{2r}(0)}(x)I_x\|_{L^2(dx)}^2+\|\I_{B_{2r}^c(0)}(x) I_x\|_{L^2(dx)}^2.$$
First, \begin{align*}
   \|\I_{B_{2r}(0)}&(x)I_x\|_{L^2(dx)}^2=\int_{|x|\le 2r}\bigg(\mathrm{PV}\!\int \big(u(y)-u(x)\big) k_s(x,y)\, dy\bigg)^2\,dx\\
   &\le 2\,\big (\|u\|_\infty \vee \|\nabla^2 u\|_\infty\big) \int_{|x|\le 2r}\bigg(\int \Big(1\wedge |x-y|^2\Big) k_s(x,y)\,dy\bigg)^2\, dx\\
   &\quad +\frac{1}{2}\|\nabla u\|_\infty\int_{|x|\le 2r}\bigg(\int_{0<|z|\le 1}|z| \,|k_s (x,x+z)-k_s(x,x-z)|\, dz\bigg)^2\,dx\\
   &<\infty.
\end{align*} In the first inequality we have used \eqref{rrr}, and the last inequality follows from \eqref{l2cond} and \eqref{12cond1}. Pick $r>1$ large enough such that $\supp\, u\subset B_r(0)$. We get that
\begin{align*}
   \|\I_{B_{2r}^c(0)}(x)I_x\|_{L^2(dx)}^2&=\int_{|x|\ge 2r}\bigg(\mathrm{PV}\!\int \big(u(y)-u(x)\big) k_s(x,y)\, dy\bigg)^2\,dx\\
   &= \int_{|x|\ge 2r} \bigg(\int u(y)k_s(x,y)\,dy\bigg)^2\,dx\\
   &=  \int_{|x|\ge 2r} \bigg(\int_{ |y|\le r} u(y)k_s(x,y)\,dy\bigg)^2\,dx\\
   &\le\|u\|_\infty^2 \int \bigg(\int_{|x-y|>r} \I_{B_r(0)}(y) k_s(x,y)\,dy\bigg)^2\,dx,
\end{align*} where the second equality follows again from \eqref{rrr}.

If $\displaystyle x\mapsto\int_{|y-x|\ge 1} k_s(x,y)\,dy\in L^2(dx)$, then we have $\|\I_{B_{2r}^c(0)}(x)I_x\|_{L^2(dx)}^2<\infty$.

If $\displaystyle x\mapsto\int_{|y-x|\ge 1} k_s(x,y)\,dy\in L^\infty(dx)$, then, by the Cauchy-Schwarz inequality,
\begin{align*}&\|u\|_\infty^2 \int \bigg(\int_{|x-y|>r} \I_{B_r(0)}(y) k_s(x,y)\,dy\bigg)^2\,dx\\
  &\le \|u\|_\infty^2 \int \bigg(\int_{|x-y|>r} \I_{B_r(0)}(y) k_s(x,y)\,dy\bigg)\bigg( \int_{|x-y|>r} k_s(x,y)\,dy\bigg)\, dx\\
  &\le \|u\|_\infty ^2 \bigg[ \sup_{x\in \R^n} \int_{|x-y|>r} k_s(x,y)\,dy \bigg]\iint_{|x-y|>r} \I_{B_r(0)}(y) k_s(x,y)\,dy\,dx\\
  &=\|u\|_\infty ^2 \bigg[\sup_{x\in \R^n} \int_{|x-y|>r} k_s(x,y)\,dy\bigg] \int \I_{B_r(0)}(x) \int_{|x-y|>r}  k_s(x,y)\,dy\,dx
  \;<\;\infty,
\end{align*}
   where in the equality above we have used the symmetry of $k_s(x,y)\,dy\,dx,$ and the last inequality follows from \eqref{cond1}. This also gives us that $\|\I_{B_{2r}^c(0)}(x)I_x\|_{L^2(dx)}^2<\infty$.
   The required assertion follows from all the conclusions above.
    \end{proof}

\section{The Adjoint of a L\'{e}vy Type Operator on $\R^n$}
Assume that $E=\R^n$ is equipped with the Euclidean metric $d(x,y)=|x-y|$ and Lebesgue measure $m(dx)=dx$ as reference measure. If $k$ is symmetric, then $k_a(x,y)=0$ and Proposition \ref{levy} is identical with \cite[Theorem 2.2]{SU}. As shown by Theorem \ref{theorem3}, in this case the Cauchy principal value integral in the representation of $L$ can be rewritten as an absolutely convergent integral if we introduce a regularizing term in the integrand, and so the expression of $L$ becomes \eqref{cond3}, which is a kind of symmetric L\'{e}vy type operator, see \cite{WJ}.
This observation enables us to consider the (formal) adjoint of general (not necessarily symmetric) L\'{e}vy type operators.

Let $C_c^\infty(\R^n)$ be the space of smooth functions with compact support on $\R^n$. For $f\in C_c^\infty(\R^n)$, define the following L\'{e}vy type operator
\begin{equation}\label{du1}\begin{aligned}
    Lf(x)
    &=\int_{z\neq0}\Bigl(f(x+z)-f(x)-\nabla f(x)\cdot z{\I}_{\{|z|\le1\}}\Bigr)j(x,x+z)\,dz\\
    &\quad +\frac{1}{2}\int_{0<|z|\le 1}z\,\big(j(x,x+z)-j(x,x-z)\big)\,dz\cdot\nabla f(x),
\end{aligned}\end{equation}
where $\int_{z\neq 0}\big(1\wedge |z|^2\big)j(x,x+z)\,dz<\infty$ and
$$
    \int_{0<|z|\le 1}|z|\big|j(x,x+z)-j(x,x-z)\big|\,dz<\infty
$$
for all $x\in \R^n$.

 We will now present an explicit expression of the (formal) adjoint of the operator $L$. To state our result, we need a few assumptions. As before we write $j_s$ and $j_a$ for the symmetric and antisymmetric parts of $j$, i.e.\
$$
    j_s(x,y) := \tfrac 12\big(j(x,y)+j(y,x)\big)
    \quad\text{and}\quad
    j_a(x,y) := \tfrac 12\big(j(x,y)-j(y,x)\big).
$$ For $x,z\in\R^n$, we denote by
$$j^*(x,z):= \big|j(x,x+z)-j(x,x-z)\big|+\big|j(x+z,x)-j(x-z,x)\big|.$$

\begin{align}
\tag{H1}\label{H1}
    &x\mapsto\int \big(1\wedge (y-x)^2\big) j_s(x,y)\,dy \in L^2_{\loc}(dx);\\
     \tag{H2}\label{H2}
    &x\mapsto\int_{\{y\in\R^n\,:\, |y-x|\ge 1\}} j_s(x,y)\,dy\in L^2(dx)\cup L^\infty (dx);\\
\tag{H3}\label{H3}
    &x\mapsto\int_{0<|z|\le 1}|z|j^*(x,z)\,dz \in L^2_{\loc}(dx);\\
\tag{H4}\label{H4}
    &\sup_{x\in\R^n} \int_{j_s(x,y)\neq 0} \frac{j_a(x,y)^2}{j_s(x,y)}\,dy < \infty;\\
\tag{H5}\label{H5}
    &\sup_{x\in K} \sup_{\epsilon > 0} \left|\int_{|y-x|\ge \varepsilon} j_a(x,y)\,dy\right| < \infty
    \quad\text{for every compact set}\quad K\subset\R^n.
\end{align}
Note that \eqref{H1} is just \eqref{l2cond}, which implies \eqref{cond1}. \eqref{H2} is \eqref{l2cond2}, and \eqref{H4} is the same as \eqref{cond2}. \eqref{H3} implies
\begin{equation}\label{impli2} x\mapsto\int_{0<|z|\le 1}|z|\big|j_s(x,x+z)-j_s(x,x-z)\big|\,dz \in L^2_{\loc}(dx),\end{equation} which is just \eqref{12cond1}.  Although \eqref{H5} has no direct counterpart in Section \ref{section1}, it is satisfied when \eqref{A2} and \eqref{A3} hold with $\gamma=1$ (as it is assumed in \cite[Section 3]{FU}). As in the proof of Theorem \ref{theorem3}, we can easily obtain that, under \eqref{H1}--\eqref{H3}, the operator $L$ given by \eqref{du1} maps $C_c^\infty(\R^n)$ into $L^2(\R^n)$.

For any $f\in C_c^\infty(\R^n)$, define \begin{equation}\label{du12300}\begin{aligned}
    \Lambda f(x):
    &= \int_{z\neq0}\big(f(x+z)-f(x)-\nabla f(x)\cdot z{\I}_{\{|z|\le1\}}\big)\,j(x+z,x)\,dz\\
    &\quad +\frac{1}{2}\int_{0<|z|\le 1}z\,\big(j(x+z,x)-j(x-z,x)\big)\,dz\cdot\nabla f(x).
\end{aligned}\end{equation}

\begin{theorem}\label{dual}
Assume that \eqref{H1}--\eqref{H5} hold.

\textup{(i)} The operators $(L, C_c^\infty(\R^n))$ and $(\Lambda, C_c^\infty(\R^n))$ given by \eqref{du1} and \eqref{du12300}, respectively, are pre-generators corresponding to the semi-Dirichlet forms $(\eta_L, \mathscr{F}^0\times \mathscr{F}^0)$ and $(\eta_\Lambda, \mathscr{F}^0\times \mathscr{F}^0)$, generated by the kernels $j(x,y)$ and $j(y,x)$  on $L^2(\R^n)$ in the sense of \eqref{pregenerator} with $D(L)=D(\Lambda)=C_c^\infty(\R^n)$.

\textup{(ii)} Let $(L^*, C_c^\infty(\R^n))$ be the dual operator for $L$. Then, for any $f\in C_c^\infty(\R^n)$
\begin{equation}\label{du123}\begin{aligned}
    L^*f(x)
    &= \Lambda f(x)
    + \kappa(x)f(x),
\end{aligned}\end{equation}
where $\kappa(x)$ is a measurable function on $\R^n$ such that $\kappa(x)\,dx$ is the vague limit of the sequence of (signed) measures $\big\{\big(-2\int_{|x-y|>1/m} j_a(x,y)\,dy\big)\,dx\big\}_{m\in\N}$.
\end{theorem}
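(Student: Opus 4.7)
For part (i), I would apply Theorem \ref{theorem3} twice: once to the kernel $j(x,y)$, producing the operator $L$ together with the semi-Dirichlet form $\eta_L$, and once to $\tilde{j}(x,y):=j(y,x)$, producing $\Lambda$ and $\eta_\Lambda$. Since $\tilde{j}_s=j_s$ and $|\tilde{j}_a|=|j_a|$, the hypotheses \eqref{l2cond}, \eqref{l2cond2} and \eqref{cond2} of Theorem \ref{theorem3} transfer verbatim from \eqref{H1}, \eqref{H2} and \eqref{H4}; the remaining condition \eqref{12cond1} follows from \eqref{H3} via the bound $|j_s(x,x+z)-j_s(x,x-z)|\le\tfrac12 j^*(x,z)$, and \eqref{cond4} is a one-line Cauchy--Schwarz estimate using \eqref{H1} and \eqref{H4}. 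The only remaining point is that \eqref{du1} and \eqref{du12300} are written in terms of the full kernels rather than their symmetric/antisymmetric pieces as in \eqref{cond3}: splitting $j=j_s+j_a$ and using the substitution $z\mapsto -z$ on $\{0<|z|\le 1\}$ gives $\int_{0<|z|\le 1}z(j_a(x,x+z)+j_a(x,x-z))\,dz=0$, which absorbs the extra compensator produced when the $j_a$-part is written without a principal-value regularisation. The analogous identity for $\tilde{j}$ takes care of \eqref{du12300}.

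For part (ii), the starting point is the integral representation from Theorem \ref{th1}. Writing the $1/n$-truncated forms associated with $j$ and $\tilde{j}$ and using $\tilde{j}_a=-j_a$ together with $\mathscr{E}_n(f,g)=\mathscr{E}_n(g,f)$, the symmetric pieces cancel and one is left with
$$
    \langle L_n f,g\rangle_{L^2}-\langle f,\Lambda_n g\rangle_{L^2}
    = -\iint_{|x-y|\ge 1/n}\bigl[(f(x)-f(y))g(y)+(g(x)-g(y))f(y)\bigr] j_a(x,y)\,dx\,dy.
$$
Expanding and applying the substitution $x\leftrightarrow y$ to the cross term (which picks up a sign from $j_a(y,x)=-j_a(x,y)$) makes $\iint[f(x)g(y)+g(x)f(y)]\,j_a\,dx\,dy$ vanish, leaving
$$
    \langle L_n f,g\rangle_{L^2}-\langle f,\Lambda_n g\rangle_{L^2}
    = 2\iint_{|x-y|\ge 1/n} f(y)g(y)\,j_a(x,y)\,dx\,dy
    = \int_{\R^n} f(y)g(y)\,(-2\beta_n(y))\,dy,
$$
where $\beta_n(y):=\int_{|y-z|>1/n}j_a(y,z)\,dz$ and the last equality is the same antisymmetry trick applied to the inner $x$-integral. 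All manipulations are legitimate at the truncated level because the integrands are absolutely integrable by \eqref{cond1}.

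It then remains to pass to the limit $n\to\infty$. By Theorem \ref{th1}, the left-hand side converges to $\langle Lf,g\rangle-\langle f,\Lambda g\rangle$ for every $f,g\in C_c^\infty(\R^n)$; thus $\int\phi\,(-2\beta_n)\,dy$ converges whenever $\phi=fg$ with $f,g\in C_c^\infty(\R^n)$, and by polarization the convergence extends to every $\phi\in C_c^\infty(\R^n)$. Condition \eqref{H5} delivers a uniform-in-$n$ bound $\sup_{y\in \supp\phi}|\beta_n(y)|<\infty$, so the family of signed Radon measures $\{(-2\beta_n)\,dy\}_n$ is vaguely relatively compact. Uniqueness of the limit on the dense class $C_c^\infty(\R^n)$ upgrades the subsequential convergence to the full sequence, and weak-$*$ compactness in $L^\infty_{\loc}(\R^n)$ applied to $\{-2\beta_n\}$ identifies this vague limit with $\kappa(y)\,dy$ for some density $\kappa\in L^\infty_{\loc}(\R^n)$. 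Substituting back yields $\langle Lf,g\rangle_{L^2} = \langle f,\Lambda g+\kappa g\rangle_{L^2}$ for all $f,g\in C_c^\infty(\R^n)$, which is \eqref{du123}.

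The main obstacle will be exactly this last passage to the limit: the integrals $\int j_a(y,z)\,dz$ are typically only conditionally convergent, so a priori the dual of $L$ could pick up a singular contribution along the diagonal. Condition \eqref{H5}, which has no counterpart in Section \ref{section1}, is precisely what rules out this pathology and guarantees that the correction term is an absolutely continuous, locally bounded multiplication by $\kappa$.
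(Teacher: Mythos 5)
Your proposal is correct, but it reaches \eqref{du123} by a genuinely different decomposition than the paper. For part (i) you invoke Theorem \ref{theorem3} for the kernels $j(x,y)$ and $j(y,x)$ and reconcile \eqref{du1}, \eqref{du12300} with \eqref{cond3} via the compensator identity $\int_{0<|z|\le 1}z\,(j_a(x,x+z)+j_a(x,x-z))\,dz=0$ and a Cauchy--Schwarz derivation of \eqref{cond4} from \eqref{H1}, \eqref{H4}; this is the same substance as the paper's Step~1 (which instead verifies $\|L_\varepsilon f-Lf\|_{L^2}\to0$ directly and uses the $x\leftrightarrow y$ symmetry of \eqref{H1}--\eqref{H5}), and your explicit reduction of \eqref{cond4} fills in a point the paper leaves implicit. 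The real divergence is in part (ii): the paper \emph{symmetrizes}, studying $\langle L_\varepsilon f,g\rangle+\langle f,L_\varepsilon g\rangle=I_1^\varepsilon+I_2^\varepsilon$, identifying the $j_s$-part $-\tfrac12 I_1$ with $-\langle\widetilde Lf,g\rangle$ by appeal to \cite[Theorem 2.2]{SU}, obtaining $\kappa$ from $I_2^\varepsilon$ via Lemma \ref{lemmabelow}, and finally writing $L^*=(2\widetilde L+\kappa)-L$, where the identity $2\widetilde L-L=\Lambda$ is left implicit; you instead \emph{antisymmetrize}, computing $\langle L_nf,g\rangle-\langle f,\Lambda_n g\rangle$ so that the $j_s$-contributions cancel identically at the truncated level (legitimately, since the truncated double integrals converge absolutely by \eqref{H1} and compact supports), which bypasses both \cite{SU} and the auxiliary operator $\widetilde L$ and isolates the boundary term $-2\int fg\,\beta_n$ in one stroke. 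Your treatment of the limit also differs from Lemma \ref{lemmabelow}: where the paper shows $\mu$ is a Radon measure of order zero and proves $\mu\ll dx$ by a reverse-Fatou argument plus Radon--Nikodym, you use \eqref{H5} to get weak-$*$ compactness of $\{\beta_n\}$ in $L^\infty_{\loc}$ and identify the vague limit with a density directly; this is equally valid and yields the slightly stronger conclusion $\kappa\in L^\infty_{\loc}(\R^n)$. What the paper's route buys in exchange is the structural link to the symmetric theory (the operator $\widetilde L$ and the SPV/Beurling--Deny decomposition of \cite{HMS}), whereas yours is shorter and self-contained; your closing remark about \eqref{H5} ruling out a singular diagonal contribution matches the paper's motivation exactly.
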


By \eqref{H1}, \eqref{impli2} and \eqref{H5}, the operator $L^*$ given by \eqref{du123} is well defined on $C_c^\infty(\R^n)$. From the proof of Theorem \ref{theorem3}, we know that, under \eqref{H1}--\eqref{H3} and \eqref{H5}, the operator $L^*$ maps $C_c^\infty(\R^n)$ into $L^2(\R^n).$ In particular, if $j$ is symmetric, i.e.\ $j(x,y)=j(y,x)$ for all $x,y\in\R^n$, Theorem \ref{dual} shows that $L=\Lambda=L^*$ on $C_c^\infty(\R^n)$; that is, $L$ defined by \eqref{du1} is a symmetric L\'{e}vy type operator. For further details we refer to \cite[Theorem 2.2]{SU}
and \cite[Theorem 1.2]{WJ}.

\begin{proof} [Proof of Theorem \ref{dual}]
The proof is divided into four steps. Throughout the proof we fix some $f,g\in C_c^\infty(\R^n)$. Note that the conditions on $j(x,y)$ in assumptions \eqref{H1}-\eqref{H5} are symmetric with respect to $x$, $y\in\R^n$. Therefore it is enough to prove part (i) for the operator $L$.

\medskip\noindent
\emph{Step 1:} For any $\varepsilon>0$, we define
\begin{align*}
    L_\varepsilon f(x)
    &:= \int_{|z|\ge \varepsilon}\Big(f(x+z)-f(x)-\nabla f(x)\cdot z{\I}_{\{|z|\le1\}}\Big)j(x,x+z)\, dz\\
    &\quad + \frac{1}{2}\int_{\varepsilon\le|z|\le1}z\,\big(j(x,x+z)-j(x,x-z)\big)\,dz\cdot\nabla f(x)\\
    &= \int_{|z|\ge \varepsilon}\Big(f(x+z)-f(x)\Big)j(x,x+z)\, dz\\
    &\quad -\frac{1}{2}\int_{\varepsilon\le|z|\le1}z\,\big(j(x,x+z)+j(x,x-z)\big)\,dz\cdot\nabla f(x).
\end{align*}
Since for every $x\in\R^n$,
$$
    \int_{\varepsilon\le|z|\le1}z\,\big(j(x,x+z)+j(x,x-z)\big)\,dz=0,
$$
we get
$$
    L_\varepsilon f(x)=\int_{|z|\ge \varepsilon} \Big(f(x+z)-f(x)\Big)j(x,x+z)\,dz;
$$
due to \eqref{H1}, $L_\varepsilon f \in L^2(\R^n)$. Recall that $\langle f, g\rangle_{L^2} = \int f(x)g(x)\,dx$ denotes the inner product in $L^2(\R^n)$. We have
\begin{equation}\label{pregenerator1}\begin{aligned}
    \langle L_\varepsilon f, g\rangle_{L^2}
    &= \int g(x) \int_{|z|\ge \varepsilon}\big(f(x+z)-f(x)\big)j(x,x+z)\,dz\,dx\\
    &= \int g(x) \int_{|y-x|\ge \varepsilon} \big(f(y)-f(x)\big)j(x,y)\,dy\,dx.
\end{aligned}\end{equation}

Note that under \eqref{H1}--\eqref{H3}, for any $f\in C_c^\infty(\R^n)$, \begin{equation}\label{pregenerator2}\lim_{\varepsilon\to 0}\|L_\varepsilon f-L f\|_{L^2}=0.\end{equation}
This along with \eqref{pregenerator1}, \eqref{H4} and Theorem \ref{th1} yields (i) for the operator $L$.

\medskip\noindent
\emph{Step 2:} We will now prove (ii). We begin with showing that the limit
\begin{equation}\label{pdu2}
    A(f,g):=-\frac{1}{2}\lim_{\varepsilon\rightarrow0}\bigg[\langle L_\varepsilon f,g\rangle_{L^2}+\langle f,L_\varepsilon g\rangle_{L^2}\bigg]
\end{equation}
exists. Observe that
\begin{align*}
    \langle L_\varepsilon f&,g\rangle_{L^2}+\langle f,L_\varepsilon g\rangle_{L^2}\\
    &=\bigg[\int g(x)\int_{|y-x|\ge \varepsilon} \big(f(y)-f(x)\big)j(x,y)\,dy\,dx\\
    &\qquad+\int f(x)\int_{|y-x|\ge \varepsilon}\big(g(y)-g(x)\big)j(x,y)\,dy\,dx\bigg]\\
    &=\iint_{|y-x|\ge \varepsilon}\Big(\big(f(y)-f(x)\big)g(x)+\big(g(y)-g(x)\big)f(x)\Big)j(x,y)\,dx\,dy\\
    &=\iint_{|y-x|\ge\varepsilon} \Big(\!\big(f(y)-f(x)\big)g(x)+\big(g(y)-g(x)\big)f(x)\Big)j_s(x,y)\,dx\,dy\\
    &\qquad+\iint_{|y-x|\ge\varepsilon}\Big(\big(f(y)-f(x)\big)g(x)+\big(g(y)-g(x)\big)f(x)\Big)j_a(x,y)\,dx\,dy\\
    &=:I_1^\varepsilon+I_2^\varepsilon.
\end{align*}

If we change $x$ and $y$ in the expression of $I_1^\varepsilon$, we get
$$
    I_1^\varepsilon
    = -\iint_{|y-x|\ge\varepsilon}\Big(\big(f(y)-f(x)\big)g(y)+\big(g(y)-g(x)\big)f(y)\Big)j_s(x,y)\,dx\,dy,
$$
which, if added to the original expression for $I_1^\varepsilon$, yields
that
\begin{equation*}
I_1^\varepsilon
=-\iint_{|y-x|\ge \varepsilon} \big(f(y)-f(x)\big)\big(g(y)-g(x)\big)j_s(x,y)\,dx\,dy.
\end{equation*}
Because of \eqref{H1} we find
\begin{equation}\label{pdu3}
    I_1: = \lim_{\varepsilon\to 0}I_1^\varepsilon
    =-\iint_{x\neq y} \big(f(y)-f(x)\big)\big(g(y)-g(x)\big)j_s(x,y)\,dx\,dy.
\end{equation}

On the other hand, we see as in the proof of Theorem \ref{th1}, that under \eqref{H1} and \eqref{H4}, the limit $I_2: = \lim_{\varepsilon\to 0} I_2^\epsilon$ exists and
\begin{equation}\label{pdu4}
    I_2=\iint\Big(\big(f(y)-f(x)\big)g(x)+\big(g(y)-g(x)\big)f(x)\Big)j_a(x,y)\,dx\,dy
\end{equation}
with an absolutely convergent integral.

\medskip\noindent
\emph{Step 3:}
According to \cite[Theorem 2.2]{SU}, the assumptions \eqref{H1} and \eqref{H3} imply that
\begin{equation}\label{pdu5}
    -\frac 12\,I_1=-\langle\widetilde{L} f,g\rangle_{L^2},
\end{equation}
where
\begin{equation*}\begin{aligned}
    \widetilde{L}f(x)
    &:= \int_{z\neq 0}\Big(f(x+z)-f(x)-\nabla f(x)\cdot z{\I}_{\{|z|\le1\}}\Big) j_s(x,x+z)\,dz\\
    &\quad +\frac{1}{2}\int_{0<|z|\le 1}z\,\big(j_s(x,x+z)-j_s(x,x-z)\big)\,dz\cdot\nabla f(x).
\end{aligned}\end{equation*}
With the same reasoning as above, the proof of Theorem \ref{theorem3} shows that, under \eqref{H1}---\eqref{H3}, the operator $ \widetilde{L}$ maps $C_c^\infty(\R^n)$ into $L^2(\R^n)$.

If we change $x$ and $y$ in the expression of $I_2$, we get because of the antisymmetry of $j_a$
$$
    I_2
    =\iint_{x\neq y} \Big(\big(f(y)-f(x)\big)g(y)+\big(g(y)-g(x)\big)f(y)\Big)j_a(x,y)\,dx\,dy
$$
and if we add this to the original expression for $I_2$, we see
$$
    I_2
    =\iint_{x\neq y} \Big(f(y)g(y)-f(x)g(x)\Big)j_a(x,y)\,dx\,dy.
$$

In the same way we find that
\begin{align*}
    I_2^\varepsilon
    &= \iint_{|y-x|\ge \varepsilon}\Big(f(y)g(y)-f(x)g(x)\Big)j_a(x,y)\,dx\,dy\\
    &= \iint_{|y-x|\ge \varepsilon}f(y)g(y)\,j_a(x,y)\,dx\,dy
        -\iint_{|y-x|\ge \varepsilon}f(x)g(x)\,j_a(x,y)\,dx\,dy\\
    &= -\iint_{|y-x|\ge \varepsilon}f(x)g(x)\,j_a(x,y)\,dx\,dy
        -\iint_{|y-x|\ge \varepsilon}f(x)g(x)\,j_a(x,y)\,dx\,dy\\
    &= -2\int f(x)g(x) \bigg[\int_{|y-x|\ge \varepsilon} j_a(x,y)\,dy\bigg] dx.
\end{align*}
Since $f,g\in C_c^\infty(\R^n)$ are arbitrary and since the limit $\lim_{\varepsilon\to 0} I_2^\varepsilon = I_2$ exists, we will see from Lemma \ref{lemmabelow} below that the vague limit of the sequence of (signed) measures $\big\{\big(-2\int_{|x-y|>\varepsilon} j_a(x,y)\,dy\big)\,dx\big\}_{\varepsilon>0}$, $\varepsilon\to0$ exists and possesses a density function $\kappa(x)$ with respect to Lebesgue measure.

Thus, by \eqref{H5} again, for all $f,g\in C_c^\infty(\R^n)$,
\begin{equation}\label{pdu6}
    I_2 = \int f(x)g(x)  \, \kappa(x)\,dx= \langle\kappa f, g\rangle_{L^2} .
\end{equation}

\medskip\noindent
\emph{Step 4:} Since $C_c^\infty(\R^n)$ is dense in $L^2(\R^n)$, the formal adjoint $L^*$ of the operator $L$ satisfies that
$$
    \langle{{L}^*} f,g\rangle_{L^2}
    =\langle f, L g\rangle_{L^2},\qquad f,g\in C_c^\infty(\R^n).
$$
On the other hand, according to \eqref{pregenerator2}, for any $f,g\in C_c^\infty(\R^n)$,
\begin{equation}\label{pdu7}
    A(f,g)=-\frac{1}{2}\lim_{\varepsilon\rightarrow0}\bigg[\langle L_\varepsilon f,g\rangle_{L^2}+\langle f,L_\varepsilon g\rangle_{L^2}\bigg]=-\frac{1}{2}\bigg[\langle L f,g\rangle_{L^2}+\langle f,Lg\rangle_{L^2}\bigg].
\end{equation}

Combining \eqref{pdu2}--\eqref{pdu7}, we have
$$
    \langle {L}^* f,g\rangle_{L^2}+\langle Lf, g\rangle_{L^2}
    = \langle f,Lg\rangle_{L^2}+\langle Lf, g\rangle_{L^2}
    =\Big(2\langle{\widetilde{{L}}} f,g\rangle_{L^2}+\langle \kappa f, g\rangle_{L^2}\Big).
$$
Therefore,
$$
    L^*=(2\widetilde{L}+\kappa)-L
$$
which is what we have claimed.
\end{proof}

\begin{lemma}\label{lemmabelow}  Under the assumption \eqref{H5}, the killing term $
    \kappa(x)$ given in the right side of the dual operator $L^*$ defined by \eqref{du123}
exists such that $\kappa(x)\,dx$ is the vague limit of the sequence of (signed) measures $\big\{\big(-2\int_{|x-y|>1/m} j_a(x,y)\,dy\big)\,dx\big\}_{m\in\N}$.
\end{lemma}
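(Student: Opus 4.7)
The plan is to realize $\kappa$ as a weak-$*$ limit point in $L^\infty_{\loc}(\R^n)$ of the density functions $\phi_m(x):=-2\int_{|y-x|>1/m} j_a(x,y)\,dy$, and then translate weak-$*$ convergence of the densities into vague convergence of the associated signed measures $\mu_m(dx) := \phi_m(x)\,dx$.

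The crucial input is that \eqref{H5} gives, for every compact $K\subset\R^n$,
\[
    \sup_{m\in\N}\|\phi_m\|_{L^\infty(K)} \le 2\sup_{x\in K}\sup_{\varepsilon>0}\Big|\int_{|y-x|\ge\varepsilon} j_a(x,y)\,dy\Big| < \infty.
\]
Hence $(\phi_m|_K)_m$ is bounded in $L^\infty(K) = (L^1(K))^*$. First I would invoke sequential Banach--Alaoglu (applicable since $L^1(K)$ is separable) together with a standard diagonal argument along an exhaustion $K_\ell\uparrow\R^n$ by compacts to extract a subsequence $(m_k)$ and a function $\kappa\in L^\infty_{\loc}(\R^n)$ with $\phi_{m_k}\stackrel{*}{\rightharpoonup}\kappa$ in $L^\infty(K)$ for every compact $K$. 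Testing against an arbitrary $h\in C_c(\R^n)$, which sits in $L^1(K)$ for any $K\supset\supp h$, gives $\int h\,\phi_{m_k}\,dx\to\int h\,\kappa\,dx$, i.e.\ $\mu_{m_k}\to\kappa\,dx$ vaguely.

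To upgrade this subsequential statement to convergence of the full sequence as stated in the Lemma, I would show that every weak-$*$ subsequential limit coincides with $\kappa$. In the setting in which the Lemma is applied, namely Step~3 of the proof of Theorem~\ref{dual}, the limit $I_2=\lim_{\varepsilon\to0}I_2^\varepsilon$ is already known to exist for all $f,g\in C_c^\infty(\R^n)$; choosing $f$ equal to $1$ on $\supp h$ and $g=h$ for arbitrary $h\in C_c^\infty(\R^n)$, this yields existence of $\lim_m\int h\,\phi_m\,dx$. Since $C_c^\infty$ is dense in $L^1(K)$ and $(\phi_m|_K)$ is uniformly bounded in $L^\infty(K)$, any two weak-$*$ subsequential limits must agree when tested against $L^1(K)$, hence coincide as elements of $L^\infty(K)$. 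The whole sequence therefore converges vaguely to $\kappa\,dx$.

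The main substantive step, and the one that explains the role of \eqref{H5} in the hypothesis, is the absolute continuity of the vague limit with respect to Lebesgue measure: the compactness extraction and diagonal argument are routine, but it is precisely the uniform local $L^\infty$ bound afforded by \eqref{H5} that forces the limiting killing term to be given by multiplication by a genuine $L^\infty_{\loc}$ function $\kappa$, rather than by a signed Radon measure with possible singular part. Without \eqref{H5} the vague limit could a priori carry a singular component and the clean representation $L^*f=\Lambda f+\kappa f$ in Theorem~\ref{dual} would break down.
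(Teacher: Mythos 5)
Your proof is correct, but it follows a genuinely different route from the paper's. The paper argues measure-theoretically: it defines the limit functional $\mu(fg):=\lim_{\varepsilon\to0}I_2^\varepsilon$ on $C_c^\infty(\R^n)$, uses the uniform bound from \eqref{H5} to show that $\mu$ is a distribution of order zero and hence a signed Radon measure, and then establishes $\mu(dx)\ll dx$ by a reverse-Fatou argument, bounding $|\int\varphi\,d\mu|$ by $\int|\varphi(x)|f_\varphi(x)\,dx$ with $f_\varphi(x)=\limsup_m\bigl|\int_{|x-y|>1/m}j_a(x,y)\,dy\bigr|$ locally bounded; the density $\kappa$ then comes from Radon--Nikod\'ym. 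You instead work directly with the densities $\phi_m$: the same \eqref{H5} bound gives uniform local $L^\infty$ control, sequential Banach--Alaoglu on $L^\infty(K)=(L^1(K))^*$ plus a diagonal argument yields a weak-$*$ limit point $\kappa\in L^\infty_{\loc}$, and the existence of $\lim_m\int h\,\phi_m\,dx$ for $h\in C_c^\infty$ (via $f\equiv1$ on $\supp h$, $g=h$) pins down the subsequential limits and upgrades to convergence of the whole sequence. Your approach buys two things: absolute continuity is automatic (weak-$*$ limits in $L^\infty(K)$ are functions, so no Riesz representation or Radon--Nikod\'ym step is needed to exclude a singular part), and you obtain the quantitative extra information $\kappa\in L^\infty_{\loc}(\R^n)$, which the paper's proof also yields implicitly but never states; the paper's route is shorter and avoids the compactness/diagonalization machinery. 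Note that both proofs share the same implicit hypothesis beyond \eqref{H5}: the existence of $I_2=\lim_{\varepsilon\to0}I_2^\varepsilon$, established in Step 2 of the proof of Theorem \ref{dual} under \eqref{H1} and \eqref{H4}, is imported in exactly the same way in the paper's proof as in yours, so flagging that dependence explicitly, as you did, is appropriate rather than a defect.
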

\begin{proof}   In the proof of Theorem \ref{dual} we have seen that the limit
 $$\mu(fg):=\lim_{\varepsilon\to0}I_2^\varepsilon=-2\lim_{\varepsilon\to0}\int f(x)g(x) \bigg[\int_{|y-x|\ge \varepsilon} j_a(x,y)\,dy\bigg] dx$$exists for all $f$, $g\in C_c^\infty(\R^n)$. Thus, $\mu$ is an element in $\mathscr{D}'=(C_c^\infty(\R^n))^*$. Since, under \eqref{H5},   $|\int \varphi(x)\,\mu(dx)|\leq c_K \|\varphi\|_\infty$ for all continuous functions $u$ with support in the compact set $K$,  $\mu$ is a distribution of order zero, hence a (signed) Radon measure. Thus, it remains to show that $\mu$ is absolutely continuous with respect to Lebesgue measure $dx$.

For any $\varphi\in C_c^\infty(\R^n)$, we know from \eqref{H5} that for all $x\in \mathrm{supp}\,\varphi$,
\begin{align*}
    f_\varphi(x)
    &:=\limsup_{m\to\infty}\bigg|\int_{|x-y|>1/m}j_a(x,y)\,dy\bigg|\\
    &\leq \sup_{x\in \mathrm{supp}\,\varphi}\sup_{m\in\N} \bigg|\int_{|x-y|>1/m}j_a(x,y)\,dy\bigg|\\
    &\leq C_\varphi
    <\infty.
\end{align*}
Therefore, by (a variant of) Fatou's lemma, we have
\begin{align*} \Big| \int \varphi(x)\,\mu(dx)\Big|&\le \limsup_{m\to\infty}\int |\varphi(x)|\bigg|\int_{|x-y|>1/m}j_a(x,y)\,dy \bigg|\,dx\\
&\le \int |\varphi(x)|\limsup_{m\to\infty}\bigg|\int_{|x-y|>1/m}j_a(x,y)\,dy \bigg|\,dx\\
&\le \int |\varphi(x)| f_\varphi(x)\,dx .\end{align*}
This proves that $\mu(dx)\ll dx$.
\end{proof}

\medskip

We have seen in Theorem \ref{dual} that, under the assumptions \eqref{H1}--\eqref{H5}, the operator $(L, C_c^\infty(\R^n))$ given by \eqref{du1} generates a semi-Dirichlet form $(\eta, \mathscr{F}^0\times \mathscr{F}^0)$ on $L^2(\R^n)$. Therefore, there exists a unique sub-Markov semigroup $\{T_t\}_{t\ge0}$ associated with the form $(\eta, \mathscr{F}^0\times \mathscr{F}^0)$ on $L^2(\R^n)$. The dual semigroup $\{{T}^*_t\}_{t\ge0}$ is positivity preserving but it is, in general, not sub-Markovian, see \cite[Section 1]{FU} and the references therein. The following result provides a sufficient condition for the sub-Markov property of the dual semigroup.

\begin{corollary}
    Let $L, \Lambda$ and $L^*$ be as in Theorem \ref{dual}. If the killing term $\kappa(x)$ in the representation \eqref{du123} of the dual operator $L^*$ is non-positive, then the dual semigroup $\{{T}^*_t\}_{t\ge0}$ corresponding to the semi-Dirichlet form $(\eta, \mathscr{F}^0\times \mathscr{F}^0)$ is sub-Markovian.
\end{corollary}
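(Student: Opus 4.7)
The approach is to recognize that, by transposition, the dual semigroup $T_t^*$ is exactly the semigroup generated by the flipped bilinear form $\tilde\eta(u,v):=\eta(v,u)$, and then to show that this flipped form is itself a semi-Dirichlet form whenever $\kappa\leq 0$.

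Starting from the adjoint identity $\langle Lv,u\rangle_{L^2}=\langle v,L^*u\rangle_{L^2}$, I would first observe that for $u,v\in C_c^\infty(\R^n)$,
$$
\tilde\eta(u,v)=\eta(v,u)=-\langle Lv,u\rangle_{L^2}=-\langle L^*u,v\rangle_{L^2}=\eta_\Lambda(u,v)-\langle\kappa u,v\rangle_{L^2},
$$
where the last equality uses the decomposition $L^*=\Lambda+\kappa$ from Theorem \ref{dual}(ii) together with the identification in Theorem \ref{dual}(i) of $\Lambda$ as the pre-generator of the semi-Dirichlet form $\eta_\Lambda$ associated with the transposed kernel $j(y,x)$. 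Because $\kappa$ is locally bounded by \eqref{H5} and $C_c^\infty(\R^n)$ is a dense core in $\mathscr{F}^0$, this identity extends to all of $\mathscr{F}^0\times\mathscr{F}^0$.

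It then remains to verify the defining properties (i)--(iv) of a semi-Dirichlet form for $\tilde\eta$. Properties (i)--(iii) (lower boundedness, the weak sector condition, and closedness) are inherited from $\eta_\Lambda$ up to the multiplication perturbation by $\kappa$. For the contraction property (iv), I would compute
$$
\tilde\eta(u^+\wedge 1,\,u-u^+\wedge 1)=\eta_\Lambda(u^+\wedge 1,\,u-u^+\wedge 1)-\int_{\R^n}\kappa(x)(u^+\wedge 1)(x)(u-u^+\wedge 1)(x)\,dx.
$$
The first summand is $\geq 0$ because $\eta_\Lambda$ itself is a semi-Dirichlet form. A pointwise check shows that $(u^+\wedge 1)(u-u^+\wedge 1)\geq 0$ everywhere: on $\{u\leq 1\}$ one of the two factors vanishes, while on $\{u>1\}$ the product equals $u-1\geq 0$. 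Combined with $\kappa\leq 0$, this forces the second summand to be $\geq 0$ as well. Hence (iv) holds for $\tilde\eta$, so $\tilde\eta$ is a semi-Dirichlet form, and the associated semigroup $T_t^*$ is therefore sub-Markovian.

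The main obstacle is organizational rather than computational: one has to carefully invoke the Ma--R\"ockner framework that the transposed form of a coercive closed form generates the adjoint semigroup and that the Markov property (iv) characterizes sub-Markovianity (cf.\ \cite[Chapter I, Proposition 4.7]{MR}). A secondary subtlety is verifying that the identity $\tilde\eta=\eta_\Lambda-\langle\kappa\cdot,\cdot\rangle$ genuinely extends from the core $C_c^\infty(\R^n)$ to the full domain $\mathscr{F}^0$; this follows from the density assertion in Theorem \ref{dual}(i) together with the local boundedness of $\kappa$ supplied by \eqref{H5}, which makes the multiplication operator a form-bounded perturbation on each compact set carrying the supports of the test functions involved.
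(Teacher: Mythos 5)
Your proof is correct and follows essentially the same route as the paper: both decompose the transposed (dual) form as $\eta_{L^*}(u,v)=\eta_\Lambda(u,v)-\langle\kappa u,v\rangle_{L^2}$ via $L^*=\Lambda+\kappa$, reduce sub-Markovianity of $\{T^*_t\}_{t\ge0}$ to the contraction property $\eta_{L^*}(u^+\wedge 1,\,u-u^+\wedge 1)\ge 0$ (the paper invokes Kunita's criterion, you invoke the equivalent Ma--R\"ockner characterization), and verify it by combining the semi-Dirichlet property of $\eta_\Lambda$ with $\kappa\le 0$ and the pointwise inequality $(u^+\wedge 1)(u-u^+\wedge 1)\ge 0$. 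Your explicit case check of that pointwise inequality is if anything slightly cleaner than the paper's shorthand $u-u^+\wedge 1=(u-1)^+$, which is only valid where $u^+\wedge 1\neq 0$, though that is all that matters under the integral.
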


\begin{proof}
    We have seen in Theorem \ref{dual} that the operator $(\Lambda, C_c^\infty(\R^n))$ given by \eqref{du12300} is the pre-generator of the semi-Dirichlet form $(\eta_\Lambda, \mathscr{F}^0\times \mathscr{F}^0)$ generated by the kernel $j(y,x)$  on $L^2(\R^n)$, cf.\ \eqref{pregenerator} with $D(\Lambda)=C_c^\infty(\R^n)$. Therefore, there is a unique sub-Markov semigroup $\{T^\Lambda_t\}_{t\ge0}$ associated with the form $(\eta_\Lambda, \mathscr{F}^0\times \mathscr{F}^0)$ on $L^2(\R^n)$. It is well-known, cf.\ for example \cite[Corollary, page 334]{kunita}, that $\{T_t^\Lambda\}_{t\geq 0}$ is sub-Markovian if, and only if, the corresponding form satisfies
    $$
        Nu\in\mathscr F^0
        \quad\text{and}\quad
        \eta_\Lambda(Nu, u-Nu) \geq 0
        \quad\text{for all $u\in \mathscr F^0$}
    $$
    where $Nu = u^+\wedge 1$ is the normal contraction.
    By definition, the form corresponding to $\{{T}^*_t\}_{t\ge0}$ is $\eta_{L^*}(u,v) = \eta_L(v,u)$, and the proof of Theorem \ref{dual} shows that $L^*u = \Lambda u + \kappa u$ and $\eta_{L^*}(u,v) = \eta_\Lambda(u,v) - \langle\kappa u, v\rangle_{L^2}$. Thus,
    \begin{align*}
        \eta_{L^*}(Nu,u-Nu)
        &= \eta_\Lambda(Nu,u - Nu) - \langle\kappa Nu, u-Nu\rangle_{L^2}\\
        &\geq -\langle\kappa Nu, u-Nu\rangle_{L^2}\\
        &= -\int \kappa(x) (u^+(x)\wedge 1) (u(x)-1)^+\,dx
        \geq 0.
    \end{align*}
    Therefore, $\{T_t^*\}_{t\geq 0}$ is sub-Markovian.
\end{proof}

\section{Example: Stable-like Processes}

Let $E=\R^n$, and $m(dx)=dx$ be Lebesgue measure on $\R^n$. Consider the following integro-differential operator
$$
    Lu(x)= w(x)\int_{z\neq0}\Big(u(x+z)-u(x)-\nabla u(x)\cdot z\I_{\{|z|\le 1\}}(z)\Big)|z|^{-n-\alpha(x)}\,dz
$$
for $u\in C_c^\infty(\R^n)$. The weight function $w(x)$ is chosen in such a way that
$$
    w(x)
    = \alpha(x)2^{\alpha(x)-1}\, \frac{\Gamma\big(\frac 12\alpha(x)+\frac 12 n\big)}{\pi^{n/2}\,\Gamma\big(1-\frac 12\alpha(x)\big)},
$$
and so $L e_\xi(x)=-|\xi|^{\alpha(x)}e_\xi(x)$, where $e_\xi(x)=e^{ix\cdot\xi}$,
see e.g.\ \cite[Exercise 18.23, page 184]{ber-for}. With this norming, $L$ can be written as a pseudo-differential operator $-p(x,D)$ with the symbol $-|\xi|^{\alpha(x)}$,
$$
    Lu(x) = \int e^{ix\cdot\xi} |\xi|^{\alpha(x)}\widehat u(\xi)\,d\xi = -(-\Delta)^{\alpha(x)} u(x),
$$
and this shows that $L=-(-\Delta)^{\alpha(x)}$ is a stable-like operator in the sense of Bass \cite{B}.
Note that, the stable-like operator is a special case of the L\'{e}vy type operator given by \eqref{du1}, and in this case, since for any $x$, $z\in\R^d$, $$j(x,x+z)=w(x)|z|^{-n-\alpha(x)},$$ the second term on the right hand side of \eqref{du1} vanishes automatically.

For $r>0$, define
$$
    \beta(r):=\sup_{|x-y|\le r} |\alpha(x)-\alpha(y)|.
$$

\begin{proposition}\label{stable}
Let $L=-(-\Delta)^{\alpha(x)}$ and suppose that there exist $0<\alpha_1\le \alpha_2<2$ such that
$$
    \alpha_1\le \alpha(x)\le \alpha_2
    \quad\text{for all}\quad x\in \R^n,
$$
and
$$
    \int_0^1 \frac{\big(\beta(r) |\log r|\big)^2}{r^{1+\alpha_2}}\,dr < \infty.
$$

\noindent\textup{(i)}
The operator $(L, C_c^\infty(\R^n))$ generates a regular lower bounded semi-Dirichlet form on $L^2(\R^n)$ associated with the kernel
$$
    k(x,y)=w(x)|x-y|^{-n-\alpha(x)}.
$$

\smallskip\noindent\textup{(ii)}
If for all compact sets $K\subset\R^n$
\begin{equation}\label{weaklimit}
   \sup_{x\in K} \sup_{\epsilon > 0}\bigg|\int_{|z|\ge \varepsilon}\bigg(\frac{w(x+z)}{|z|^{n+\alpha(x+z)}}-\frac{w(x)}{|z|^{n+\alpha(x)}}\bigg)\,dz\bigg|
   \leq c_K <\infty,
\end{equation}
then the formal adjoint of $L$ is given by
\begin{align*}
    L^*f(x)
    &=\int_{z\neq0}\Big(f(x+z)-f(x)-\nabla f(x)\cdot z{\I }_{\{|z|\le1\}}\Big)\frac{w(x+z)}{|z|^{n+\alpha(x+z)}}\,dz\\
    &\quad+\frac{1}{2}\int_{0<|z|\le 1} z \bigg(\frac{w(x+z)}{|z|^{n+\alpha(x+z)}}-\frac{w(x-z)}{|z|^{n+\alpha(x-z)}}\bigg)\,dz\cdot\nabla f(x)
    +\kappa(x)f(x).
\end{align*}
where $\kappa(x)$ is the density of a (signed) Radon measure on $\R^n$ such that $\kappa(x)\,dx$ is the vague limit of the sequence of (signed) measures $$\bigg\{\bigg(\int_{|z|>1/m} \bigg(\frac{w(x+z)}{|z|^{n+\alpha(x+z)}}-\frac{w(x)}{|z|^{n+\alpha(x)}}\bigg)\,dz\bigg)\,dx\bigg\}_{m\in\N}.$$
\end{proposition}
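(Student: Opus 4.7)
The plan is to derive Proposition \ref{stable} directly from Theorems \ref{th1}, \ref{theorem3} and \ref{dual} applied to the kernel $k(x,y) = w(x)|x-y|^{-n-\alpha(x)}$. For part (i) this reduces to verifying the analytic hypotheses \eqref{H1}--\eqref{H4} (the auxiliary condition \eqref{cond4} falls out of the same estimates). For part (ii) one observes that the substitution $y = x+z$ identifies \eqref{weaklimit} with \eqref{H5}, so Theorem \ref{dual} yields the stated formula for $L^*$; the simplification that $j(x,x+z) = j(x,x-z) = w(x)|z|^{-n-\alpha(x)}$ makes the drift correction in \eqref{du1} vanish, explaining why in $L^*$ only the analogous correction involving $j(x\pm z,x)$ survives.

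The core estimate concerns the antisymmetric part. Write $r = |x-y|$, $M = \max(\alpha(x),\alpha(y))$, $m = \min(\alpha(x),\alpha(y))$, and assume $\alpha(x) \le \alpha(y)$ without loss of generality; then
\begin{equation*}
    k(x,y) - k(y,x) = r^{-n-M}\bigl[w(x)(r^{M-m}-1) + (w(x)-w(y))\bigr].
\end{equation*}
Since $w$ is smooth on the compact interval $[\alpha_1,\alpha_2] \subset (0,2)$, $|w(x)-w(y)| \le C\beta(r)$; and for $r \le 1$ the elementary inequality $|r^{M-m}-1| \le (M-m)|\log r| \le \beta(r)|\log r|$ gives
\begin{equation*}
    |k_a(x,y)| \le C\, r^{-n-M}\beta(r)|\log r|, \qquad k_s(x,y) \ge c\, r^{-n-M}, \qquad r \le 1,
\end{equation*}
the lower bound being immediate from $k_s \ge \tfrac{1}{2}\max\{w(x)r^{-n-\alpha(x)}, w(y)r^{-n-\alpha(y)}\}$. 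Using $r^{-n-M} \le r^{-n-\alpha_2}$ for $r\le 1$, this yields $k_a^2/k_s \le C\, r^{-n-\alpha_2}(\beta(r)|\log r|)^2$ on $\{r\le 1\}$, and integration in spherical coordinates produces exactly $\int_0^1 (\beta(r)|\log r|)^2 r^{-1-\alpha_2}\,dr$, which is finite by hypothesis. For $r \ge 1$, $k_a^2/k_s \le k_s \le C r^{-n-\alpha_1}$ is radially integrable since $\alpha_1 > 0$. This proves \eqref{H4}.

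The remaining hypotheses follow from the same circle of estimates. \eqref{H1} and \eqref{H2} reduce to the uniformly bounded radial integrals $\int_0^1 r^{1-\alpha_2}\,dr$ and $\int_1^\infty r^{-1-\alpha_1}\,dr$, so both belong to $L^\infty \subset L^2_{\loc}$. For \eqref{H3}, only the second summand in $j^*(x,z)$ contributes (the first vanishes by evenness of the stable-like kernel), and the factorisation above applied with $x\pm z$ in place of $x,y$ gives $j^*(x,z) \le C|z|^{-n-\alpha_2}\beta(|z|)|\log|z||$, so $\int_{|z|\le 1}|z|j^*(x,z)\,dz$ is controlled by $\int_0^1 r^{-\alpha_2}\beta(r)|\log r|\,dr$, which is finite by Cauchy--Schwarz against the hypothesis and the integrability of $r^{1-\alpha_2}$. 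With \eqref{H1}--\eqref{H5} in hand, Theorem \ref{th1} and Theorem \ref{theorem3} give part (i), while Theorem \ref{dual} completes part (ii) by substituting $j(x+z,x) = w(x+z)|z|^{-n-\alpha(x+z)}$ into \eqref{du12300}.

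The main technical obstacle is the coupling of the two pointwise bounds: both $|k_a|$ and the lower bound on $k_s$ must carry the same exponent $r^{-n-M}$ in order for the quotient $k_a^2/k_s$ to be controllable by the weight $(\beta|\log r|)^2 r^{-1-\alpha_2}$ afforded by the hypothesis. Using the cruder estimate $k_s \ge c\, r^{-n-\alpha_1}$ together with $|k_a| \le C r^{-n-\alpha_2}\beta|\log r|$ would force integrability of $(\beta|\log r|)^2 r^{-1-2\alpha_2+\alpha_1}$, a strictly stronger condition than what is assumed. The explicit factorisation with $M=\max(\alpha(x),\alpha(y))$ is therefore what makes the argument go through with precisely the stated regularity of $\alpha$.
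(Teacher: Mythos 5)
Your proposal is correct and follows essentially the same route as the paper: both reduce the proposition to verifying \eqref{H1}--\eqref{H5} (hence \eqref{cond1}, \eqref{cond2}, \eqref{cond4}) for $k(x,y)=w(x)|x-y|^{-n-\alpha(x)}$, with the same key diagonal bound $k_a(x,y)^2/k_s(x,y)\le C\,\beta(r)^2|\log r|^2\,r^{-n-\alpha_2}$ for $r=|x-y|\le 1$ and the same Cauchy--Schwarz reduction of $\int_0^1 \beta(r)|\log r|\,r^{-\alpha_2}\,dr$ to the hypothesis; your factorisation $k_a=\tfrac12 r^{-n-M}\bigl[w(x)(r^{M-m}-1)+(w(x)-w(y))\bigr]$ is merely a repackaging of the paper's two-term splitting via $r^{-\alpha(x)}-r^{-\alpha(y)}=\int_{\alpha(y)}^{\alpha(x)} r^{-u}\log r^{-1}\,du$. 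The only (harmless, easily fixed) slips are that your pointwise bound on $|k_a|$ should carry $\beta(r)(1+|\log r|)$ rather than $\beta(r)|\log r|$ (the log vanishes near $r=1$), and that comparing $j(x+z,x)$ with $j(x-z,x)$ directly yields $\beta(2|z|)$ rather than $\beta(|z|)$ -- both absorbed by the boundedness and monotonicity of $\beta$, or avoided as in the paper by passing through $j(x,x\pm z)$.
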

\begin{proof}
We check the conditions of Theorems \ref{th1} and \ref{dual}. Set $$k(x,y)=w(x)|x-y|^{-n-\alpha(x)}.$$ Then,
\begin{align*}
    k_s(x,y) &= \frac{1}{2}\Big(w(x)|x-y|^{-n-\alpha(x)}+w(y)|x-y|^{-n-\alpha(y)}\Big),\\
    k_a(x,y) &= \frac{1}{2}\Big(w(x)|x-y|^{-n-\alpha(x)}-w(y)|x-y|^{-n-\alpha(y)}\Big).
\end{align*}
From the definition of $w(x)$ it is easy to see that there exist constants $c_j>0, j=1,2,3$, such that for any $x,y\in\R^n$,
$$
    c_1\le w(x)\le c_2, \qquad |w(x)-w(y)|\le c_3|\alpha(x)-\alpha(y)|,
$$
see \cite[proof of Proposition 5.1]{FU}.

(i) Since \eqref{cond1} is obviously satisfied, we only have to verify \eqref{cond2} of Theorem \ref{th1}. We have
$$
    \sup_{x\in\R^n} \int_{|x-y|\ge 1} \frac{k_a^2(x,y)}{k_s(x,y)}\,dy
    \le \sup_{x\in\R^n} \int_{|x-y|\ge 1}  k_s(x,y)\,dy
    \le c\int_1^\infty r^{-1-\alpha_1}\,dr<\infty.
$$
To see
$$
    \sup_{x\in\R^n} \int_{|x-y|\le 1} \frac{k_a^2(x,y)}{k_s(x,y)}\,dy<\infty
$$
we write
$$
    k_a(x,y)
    =\frac{1}{2}\Big[(w(x)-w(y))|x-y|^{-n-\alpha(x)}+w(y)|x-y|^{-n}\big(|x-y|^{-\alpha(x)}-|x-y|^{-\alpha(y)}\big)\Big].
$$
Then
\begin{align*}
    \int_{|x-y|\le 1} &\frac{(w(x)-w(y))^2\,(|x-y|^{-n-\alpha(x)})^2}{w(x)|x-y|^{-n-\alpha(x)}+w(y)|x-y|^{-n-\alpha(y)}}\,dy\\
    &\le c \int_{|x-y|\le 1} \frac{(\alpha(x)-\alpha(y))^2}{|x-y|^{n+\alpha(x)}}\,dy\\
    &\le c\int_{|x-y|\le 1}\frac{\beta^2(|x-y|)}{|x-y|^{n+\alpha_2}} \,dy\\
    &= c\int_0^1 \frac{\beta^2(r)}{r^{1+\alpha_2}}\,dr.
\end{align*}
Since
$$
    |x-y|^{-\alpha(x)}-|x-y|^{-\alpha(y)}
    =\int_{\alpha(y)}^{\alpha(x)}|x-y|^{-u}\log |x-y|^{-1} \,du,
$$
we obtain for all $|x-y|\le 1$,
$$
    \big(|x-y|^{-\alpha(x)}-|x-y|^{-\alpha(y)}\big)^2
    \le \big(\log |x-y|^{-1}\big)^2\big(\alpha(x)-\alpha(y)\big)^2|x-y|^{-2(\alpha(x)\vee \alpha(y))}.
$$
Therefore,
\begin{align*}
    &\int_{|x-y|\le 1} \frac{w(y)^2\,|x-y|^{-2n}\,
    \big(|x-y|^{-\alpha(x)}-|x-y|^{-\alpha(y)}\big)^2}{w(x)|x-y|^{-n-\alpha(x)} +w(y)|x-y|^{-n-\alpha(y)}}\,dy\\
    &\le c \int_{|x-y|\le 1} \frac{\big(\alpha(x)-\alpha(y)\big)^2 \big(\log|x-y|^{-1}\big)^2}{|x-y|^n}
    \frac{|x-y|^{-2(\alpha(x)\vee \alpha(y))}}{w(x)|x-y|^{-\alpha(x)}+w(y)|x-y|^{-\alpha(y)}}\,dy\\
    &\le c\int_{|x-y|\le 1} \frac{\big(\alpha(x)-\alpha(y)\big)^2 \big(\log|x-y|^{-1}\big)^2} {|x-y|^n} \,|x-y|^{-(\alpha(x)\vee \alpha(y))}\,dy\\
    &\le c\int_{|x-y|\le 1} \frac{\beta^2(|x-y|)\big(\log|x-y|^{-1}\big)^2}{|x-y|^{n+\alpha_2}}\,dy\\
    &\le c\int_0^1 \frac{(\beta(r)|\log r|)^2}{r^{1+\alpha_2}}\,dr,
\end{align*}
and \eqref{cond2} follows.

\smallskip\noindent\textup{(ii)} Clearly, the conditions \eqref{H1} and \eqref{H2} are satisfied. From part (i), we know that \eqref{H4} is also valid. Since \eqref{H5} is covered by \eqref{weaklimit}, we only have to verify \eqref{H3}. For all $x\in\R^n$,
\begin{align*}
    \int_{|x-y|\le 1} & |x-y| \, \bigg|\frac{w(y)}{|x-y|^{n+\alpha(y)}}-\frac{w(x)}{|x-y|^{n+\alpha(x)}}\bigg|\,dy\\
    &\le \int_{|x-y|\le 1} |x-y|\, |w(x)-w(y)|\, |x-y|^{-n-\alpha(y)}\,dy\\
    &\quad+ \int_{|x-y|\le 1} |x-y|\, w(x)\, \big||x-y|^{-n-\alpha(y)}- |x-y|^{-n-\alpha(y)}\big|\,dy.
\end{align*}
With similar arguments as in the proof of part (i) we see that the right hand side of the inequality above is smaller than
$$
    c\int_0^1 \frac{\beta(r) (1+|\log r|)}{r^{\alpha_2}}\,dr
    \leq c'\left(\int_0^1 \frac{\beta(r) \,|\log r|}{r^{\alpha_2}}\,dr + 1\right).
$$
Pick $\gamma < 1/2$ such that $\alpha_2 \leq 1+2\gamma < 2$. By the Cauchy-Schwarz inequality, we find
\begin{align*}
    \int_0^1 \frac{\beta(r) |\log r|}{r^{\alpha_2}}\,dr
    &\leq \left(\int_0^1 \frac 1{r^{2\gamma}}\,dr\right)^{1/2}
    \left(\int_0^1 \frac{\beta(r)^2 |\log r|^2}{r^{2\alpha_2-2\gamma}}\,dr\right)^{1/2}\\
    &\leq \frac 1{\sqrt{1-2\gamma}} \left(\int_0^1 \frac{\beta(r)^2 |\log r|^2}{r^{1+\alpha_2}}\,dr\right)^{1/2}.
\qedhere
\end{align*}

\end{proof}

We close with this section with some comments on related results in \cite{FU,U1} and our Proposition \ref{stable}.
\begin{remark}(i) Assume that for $r\to 0$
$$
    \beta(r)\asymp r^\beta,\quad \beta>{\alpha_2}/{2},
\qquad
\text{or}
\qquad
    \beta(r)\asymp r^{\alpha_2/2} |\log r|^\varepsilon, \quad \varepsilon<-3/2.
$$
Then Proposition \ref{stable}(i) applies and shows that the operator $(L, C_c^\infty(\R^n))$ generates a regular
lower bounded semi-Dirichlet form on $L^2(\R^n)$. This is, in particular, the case if the index function $\alpha(x)$ is locally Lipschitz continuous. Thus, Proposition \ref{stable}(i) improves \cite[Proposition 5.1]{FU} where the following assumptions are used: \emph{there exist positive constants $\alpha_1$, $\alpha_2$, $M$ and $\delta$ such that for $x$, $y\in\R^n$,
$$
    0<\alpha_1\le \alpha(x)\le \alpha_2<2
    \quad\text{with}\quad \alpha_2< 1+\alpha_1/2
$$
and
$$
    |\alpha(x)-\alpha(y)|\le M|x-y|^\delta
    \quad\text{with}\quad
    0< \frac{1}{2}\big(2\alpha_2-\alpha_1\big)< \delta\le 1.
$$}

(ii) With essentially the same calculations as in the proof of Proposition \ref{stable}(ii) we can see that
$$
    \int_0^1 \frac{\beta(r) |\log r|}{r^{1+\alpha_2}}\,dr<\infty
$$
guarantees that
$$
    \int\bigg(\frac{w(x+z)}{|z|^{n+\alpha(x+z)}}-\frac{w(x)}{|z|^{n+\alpha(x)}}\bigg)dz
    \quad\text{exists}
$$
as a Lebesgue integral. This is, for example, the case if \emph{there exist positive constants $\alpha_1,\alpha_2, \delta$ and $M$ such that $0<\alpha_1\le \alpha_2<1$, $\delta\in(\alpha_2,1]$, and for all $x,y\in\R^n$,
$$
    \alpha_1\le \alpha(x)\le \alpha_2
    \quad\text{and}\quad
    |\alpha(x)-\alpha(y)|\le M|x-y|^\delta.
$$}
This shows that Proposition \ref{stable}(ii) covers the conclusion in \cite[Remark 4]{U1}.

(iii) The assumption \eqref{weaklimit} ensures the existence of the killing term $\kappa(x)$ in the dual operator $L^*$ given in Proposition \ref{stable}(ii). We will claim that \emph{under the conditions of Proposition \ref{stable} and if the index function $\alpha(x)$ also belongs to $C_b^2(\R^n)$, then \eqref{weaklimit} is fulfilled}. Indeed, in this case it is easily seen from the definition of $w(x)$ that $\alpha\in C_b^2(\R^n)$ entails $w\in C_b^2(\R^n)$. For any fixed $z\in\R^n$ with $z\neq0$, set $f_z(x):=w(x)|z|^{-n-\alpha(x)}$. Then, for any $x$, $z\in \R^n$,
$$\nabla f_z(x)=|z|^{-n-\alpha(x)}\big[\nabla w(x)-\big(\log |z|\big)w(x)\nabla \alpha(x)\big],$$ and there is a constant $c_1>0$ such that for any $x$, $z\in\R^n$ with $0<|z|<1$,
$$
    \big|\nabla^2 f_z(x)\big|\le c_1  |z|^{-n-\alpha_2}\big(1+\log^2 |z|\big).
$$
Thus, by Taylor's formula there exists $\theta\in(0,1)$ such that
$$
    f_z(x+y)-f_z(x)=\nabla f_z(x)\cdot y+ \frac{1}{2} y^\top\nabla^2 f_z(x+\theta y)y
    \qquad
    \text{for all\ } x,y,z\in\R^n.
$$
Therefore, for any $x\in\R^n$, by setting $y=z$ in the equality above,
\begin{align*}&\sup_{\epsilon > 0}\bigg|\int_{|z|\ge \varepsilon}\Big(\frac{w(x+z)}{|z|^{n+\alpha(x+z)}}-\frac{w(x)}{|z|^{n+\alpha(x)}}\Big)\,dz\bigg|\\
&\le \sup_{\epsilon > 0}\bigg|\int_{\varepsilon\le|z|<1 }\big(f_z(x+z)-f_z(x)\big)\,dz\bigg| +\int_{|z|\ge1} f_z(x)\,dz+\int_{|z|\ge 1}f_z(x+z)\,dz\\
&\le c_3+\sup_{\epsilon > 0}\bigg|\int_{\varepsilon\le|z|<1 }\bigg(\nabla f_z(x)\cdot z+ \frac{1}{2} z^\top\nabla^2 f_z(x+\theta z)z\bigg)\,dz\bigg|\\
&= c_3+\frac{1}{2}\sup_{\epsilon > 0}\bigg|\int_{\varepsilon\le|z|<1 }\bigg( z^\top\nabla^2 f_z(x+\theta z)z\bigg)\,dz\bigg|\\
&\le c_3+ c_4 \sup_{\epsilon > 0}\int_{\varepsilon\le|z|<1 }|z|^{-n-\alpha_2+2}\big(1+\log^2 |z|\big)\,dz\\
&\le c_3+c_5\int_0^1\frac{1+\log^2 r}{r^{\alpha_2-1}}\,dr=:c_6<\infty.\end{align*} The desired assertion follows.
\end{remark}

\begin{acknowledgement}
Financial support through DFG (grant Schi 419/5-2) and DAAD (PPP Kroatien) (for Ren\'{e} L.\ Schilling),
the National Natural Science Foundation of China (No.\ 11126350 and 11201073) and the Program of
Excellent Young Talents in Universities
of Fujian (No.\ JA10058 and JA11051) (for Jian Wang)
is gratefully
acknowledged. We would like to thank Professors M.\ Fukushima and T.\ Uemura \ for helpful discussions on an earlier version of this paper. The comments of an anonymous referee helped to improve the presentation.
\end{acknowledgement}


\begin{thebibliography}{9}
\bibitem{B}
Bass, R.: Uniqueness in law for pure jump Markov processes,
\emph{Probab.\ Theory Related Fields} \textbf{79} (1988), 271--287.

\bibitem{ber-for}
Berg, C.\ and Forst, G.: \emph{Potential Theory on Locally Compact Abelian Groups},
Springer-Verlag, Ergebnisse Math.\ Grenzgeb.\ vol.\ \textbf{87}, Berlin 1975.

\bibitem{FU}
Fukushima, M.\ and Uemura, T.: Jump-type Hunt processes generated by
lower bounded semi-Dirichlet forms, \emph{Ann.\
Probab.} \textbf{40} (2012), 858--889.

\bibitem{FkU}
Fukushima, M., Oshima, Y.\ and Takeda, M.: \emph{Dirichlet Forms and
Symmetric Markov Processes}, de Gruyter Studies in Math.\ vol.\
\textbf{19}, Walter de Gruyter, Berlin 1994.

\bibitem{HMS}
Hu, Z.C., Ma. Z.M.\ and Sun, W.: Extensions of L\'{e}vy-Khintchine
formula and Beurling-Deny formula in semi-Dirichlet forms setting,
\emph{J.\ Funct.\ Anal.} \textbf{239} (2006), 179--213.

\bibitem{kunita}
  Kunita, H.: Sub-Markov semi-groups in Banach lattices. In: \emph{Proc.\ Intnl.\ Conf.\ on Functional Analysis and Related Topics (Tokyo 1969)}, Univ.\ of Tokyo Press, Tokyo 1970, 332--342.

\bibitem{MR}
Ma, Z.M. and R\"{o}ckner, M.: \emph{Introduction to the Theory of (Non-Symmetric) Dirichlet Forms}, Universitext, Springer-Verlag, Berlin 1992.



\bibitem{SU}
Schilling, R.L.\ and Uemura, T.: On the Feller property of Dirichlet
forms generated by pseudo differential operators, \emph{Tohoku Math.
J.} \textbf{59} (2007), 401--422.

\bibitem{U1}
Uemura, T.: A remark on non-local operators with variable order,
\emph{Osaka J.\ Math.} \textbf{46} (2009), 503--514.



\bibitem{WJ}
Wang, J.: Symmetric L\'{e}vy type operator,
\emph{Acta Math.\ Sin.\ Eng.\ Ser.} \textbf{25} (2009), 39--46.



\end{thebibliography}
\end{document}